\numberwithin{equation}{section}
\newtheorem{theorem}{Theorem}
\setlist[enumerate]{leftmargin=.5in}
\setlist[itemize]{leftmargin=.5in}
\definecolor{trueblue}{rgb}{0.0, 0.45, 0.81}
\definecolor{lightgoldenrodyellow}{rgb}{0.98, 0.98, 0.82}
\definecolor{camel}{rgb}{0.76, 0.6, 0.42}
\definecolor{cadetblue}{rgb}{0.37, 0.62, 0.63}
\definecolor{chocolate}{rgb}{0.48, 0.25, 0.0}
\definecolor{darktaupe}{rgb}{0.28, 0.24, 0.2}
\newlength{\Oldarrayrulewidth}
\newcommand{\Cline}[2]{
  \noalign{\global\setlength{\Oldarrayrulewidth}{\arrayrulewidth}}
  \noalign{\global\setlength{\arrayrulewidth}{#1}}\cline{#2}
  \noalign{\global\setlength{\arrayrulewidth}{\Oldarrayrulewidth}}}
\definecolor{MSBlue}{rgb}{.204,.353,.541}
\titleformat*{\section}{\Large\bfseries\sffamily\color{black}}
\titleformat*{\subsection}{\large\bfseries\sffamily\color{black}}
\renewenvironment{abstract}{
  \quotation
  \textbf{\textsf{\color{black}{\abstractname.}}}
  \sffamily
}{\endquotation}
\newcommand*\samethanks[1][\value{footnote}]{\footnotemark[#1]}
\def\@fnsymbol#1{\ensuremath{\ifcase#1\or \dagger\or \ddagger\or
   \mathsection\or \mathparagraph\or \|\or **\or \dagger\dagger
   \or \ddagger\ddagger \else\@ctrerr\fi}}
\def\0{\mathbf{0}}
\def\1{\mathbf{1}}
\def\f{\mathbf{f}}
\def\g{\mathbf{g}}
\def\h{\mathbf{h}}
\def\p{\mathbf{p}}
\def\tp{\mathtt{p}}
\def\r{\mathbf{r}}
\def\x{\mathbf{x}}
\def\y{\mathbf{y}}
\def\z{\mathbf{z}}
\def\rS{\mathrm{S}}
\def\rA{\mathrm{A}}
\def\R{\mathbb{R}}
\def\cR{\mathcal{R}}
\def\A{\mathcal{A}}
\def\E{\mathcal{E}}
\def\tE{\mathtt{E}}
\def\F{\mathcal{F}}
\def\tF{\mathtt{F}}
\def\tG{\mathtt{G}}
\def\tH{\mathtt{H}}
\def\tL{\mathtt{L}}
\def\M{\mathcal{M}}
\def\N{\mathcal{N}}
\def\O{\mathrm{O}}
\def\Q{\mathcal{Q}}
\def\tR{\mathtt{R}}
\def\T{\mathcal{T}}
\def\V{\mathcal{V}}
\def\I{\mathtt{I}}
\def\B{\mathtt{B}}
\def\d{\mathrm{d}}
\def\bd{\mathbf{d}}
\newcommand{\argmin}{\operatornamewithlimits{argmin}}
\newcommand{\argmax}{\operatornamewithlimits{argmax}}
\newcommand{\mean}{\operatornamewithlimits{mean}}
\newcommand{\SD}{\operatornamewithlimits{SD}}
\title{ 
\bfseries\sffamily\color{black}{
Square-Domain Area-Preserving Parameterization for Genus-Zero and Genus-One Closed Surfaces}
}
\author{
\sffamily\color{black}
{Shu-Yung Liu\thanks{\footnotesize Department of Mathematics, National Taiwan Normal University, Taipei, Taiwan (\href{mailto:lii227857@gmail.com}{lii227857@gmail.com}, \href{mailto:yue@ntnu.edu.tw}{yue@ntnu.edu.tw}) }
\, and \, Mei-Heng Yueh\samethanks[1] }  
}
\date{}
\begin{document}
\captionsetup[figure]{labelfont={bf,sf},name={Figure},labelsep=period}
\captionsetup[table]{labelfont={bf,sf},name={Table},labelsep=period}
\maketitle

\begin{abstract}
The parameterization of closed surfaces typically requires either multiple charts or a non-planar domain to achieve a seamless global mapping. In this paper, we propose a numerical framework for the seamless parameterization of genus-zero and genus-one closed simplicial surfaces onto a unit square domain. The process begins by slicing the surface with either the shortest-path or the Reeb graph method. The sliced surface is then mapped onto the unit square using a globally convergent algorithm that minimizes the weighted variance of per-triangle area ratios to achieve area preservation. Numerical experiments on benchmark models demonstrate that our method achieves high accuracy and efficiency. Furthermore, the proposed method enables applications such as geometry images, producing accurate and high-quality surface reconstructions.

\bigskip
\textbf{Keywords.} simplicial surface, simplicial mapping, area-preserving parameterization, authalic energy minimization

\medskip
\textbf{AMS subject classifications.} 65D17, 65D18, 68U01, 68U05
\end{abstract}

\section{Introduction}
Surface parameterization aims to map a given surface onto a canonical domain while preserving certain geometric properties \cite{FlHo05, ShPR06}. Such mappings facilitate a wide range of mesh processing tasks, including re-meshing, texture mapping, and surface registration \cite{LaLu14,LuLY14,YoMY14,YuLW17}. Ideally, a length-preserving (isometric) parameterization would be most desirable, as it preserves the intrinsic geometric information of the surface. However, achieving isometry is generally impossible because the Gaussian curvature of a surface and that of its target domain may differ. As a result, most research instead focused on either angle-preserving (conformal) or area-preserving (authalic) parameterizations, depending on the requirements of the downstream application.

Authalic parameterization preserves the local area of a surface onto a parameter domain, which is particularly valuable in applications requiring uniform vertex distribution. A notable example is spherical area-preserving mapping, which enables shape description via spherical harmonics \cite{BrGK95}. This approach has found wide use in medical anatomical analysis of structures such as the brain ventricles \cite{GeSJ01} and the hippocampus \cite{StLP04, GeCC09, ChLS20}, and more recently extended to surfaces with spherical-like shapes \cite{GiCK21, ChGK22, ChSh25}. To realize area preservation on closed surfaces, a variety of computational methods have been proposed, including optimal transport formulations \cite{NaSZ17, CuQW19}, density-equalizing methods \cite{ChGK22, LyLC24, Choi24, YaCh26}, and energy minimization methods \cite{YuLL19, SuYu24, LiYu25b, SuYu24}.

Area-preserving parameterization is also crucial for open surfaces, especially in the geometry image representation \cite{GuGH02}. A geometry image encodes a surface into a regular RGB image, enabling the compression of a surface with a large number of vertices into a compact image. In this context, area preservation is essential for accurate surface reconstruction from the geometry image, as shown in \cite{LiYu25}. A variety of methods have been developed, including stretch-minimizing methods \cite{YoBS04}, optimal transport maps \cite{ZhSG13}, density-equalizing maps \cite{ChRy18, ChCR20, GiCK21, ChGK22, ChSh25}, and energy minimization methods \cite{YuLW19, LiYu24}.

Geometry images can represent closed surfaces by slicing the mesh along appropriate cutting paths to obtain a disk-type surface, and then parameterizing the sliced surface onto a square domain. In this paper, we focus on genus-zero and genus-one surfaces. With our proposed slicing strategy, these surfaces are sliced into simply connected ones that can be parameterized onto the unit-square domain. We then apply authalic energy minimization \cite{LiYu24} to compute area-preserving parameterizations, which have been shown to achieve higher accuracy than state-of-the-art methods. Finally, we demonstrate the application of our method to the geometry image representation of surfaces. By introducing a specific area measure and applying angular correction to reduce severe distortions, our framework enables high-quality surface reconstructions from geometry images.

\subsection{Contribution}
The main contributions of this work are as follows:
\begin{itemize}
\item We introduce a slicing algorithm that cuts genus-zero and genus-one closed surfaces along a prescribed path, producing a simply connected open surface.
\item We prove that the authalic energy is equivalent to the weighted variance of per-triangle area ratios, and formulate the square-domain area-preserving parameterization problem as an authalic energy minimization (AEM) problem.
\item We develop a nonlinear optimization framework with guaranteed convergence, initializing the map via a sequential quadratic approximation.
\item We demonstrate an application of our method to geometry images for effective surface representation.
\end{itemize}

\subsection{Organization of the paper}
The remainder of this paper is organized as follows. In Section \ref{sec:2}, we review preliminaries on simplicial surfaces and mappings. In Section \ref{sec:3}, we introduce a slicing method that produces simply connected open surfaces for square-domain parameterization. Section \ref{sec:4} introduces the authalic energy functional and establishes its equivalence to the weighted variance of area ratios. In Section \ref{sec:5}, we present a numerical algorithm combining fixed-point initialization with a preconditioned nonlinear conjugate gradient method for the AEM problem. Section \ref{sec:6} demonstrates the associated numerical experiments. In Section \ref{sec:7}, we apply the method to a geometry image and provide the framework to improve surface reconstruction. Section \ref{sec:8} discusses connections with prior energy formulations and the role of area preservation in geometry images. Section \ref{sec:9} concludes the paper.

\subsection{Notation}
In this paper, unless otherwise noted, the notation follows the rules as:
\begin{itemize}[label={$\bullet$}]
\item Real-valued vectors are represented using bold letters, such as $\mathbf{f}$.
\item Real-valued matrices are represented using capital letters, such as $L$.
\item Ordered sets of indices are denoted using typewriter letters, such as $\mathtt{I}$ and $\mathtt{B}$.
\item The $i$-th component of a vector $\mathbf{f}$ is written as $\mathbf{f}_i$. For an index set $\mathtt{I}$, the corresponding subvector of $\mathbf{f}$, consisting of entries $\mathbf{f}_i$ with $i \in \mathtt{I}$, is denoted by $\mathbf{f}_\mathtt{I}$.
\item The $(i,j)$-th entry of a matrix $L$ is written as $L_{i,j}$. For index sets $\mathtt{I}$ and $\mathtt{J}$, the submatrix of $L$, consisting of entries $L_{i,j}$ with $i \in \mathtt{I}$ and $j \in \mathtt{J}$, is denoted by $L_{\mathtt{I},\mathtt{J}}$.
\item The set of real numbers is represented by $\mathbb{R}$.
\end{itemize}

\section{Simplicial manifolds and simplicial mappings} \label{sec:2}
A \textit{triangular mesh} is defined as $\T = \big(\V, \E, \F \big)$, where $\V$, $\F$, and $\E$ are the sets of $n$ vertices, $m$ oriented triangular faces, and undirected edges, respectively:
\begin{subequations} \label{eq:mesh}
\begin{align}
\V &= \left\{ v_\ell = (v_\ell^1, v_\ell^2, v_\ell^3) \in\R^3 \right\}_{\ell=1}^n, \\[4pt]
\F &= \left\{ \tau_s = [ {v}_{i_s}, {v}_{j_s}, {v}_{k_s} ] \subset\R^3 \mid {v}_{i_s}, {v}_{j_s}, {v}_{k_s}\in\V \right\}_{s=1}^m, \\[4pt]
\E &= \left\{ [v_i,v_j] \subset\R^3 \mid [v_i,v_j,v_k]\in\F  \right\}.
\end{align}
\end{subequations}
Here, $[{v}_{i}, {v}_{j}, {v}_{k}]$ denotes the oriented \textit{$2$-simplex}, i.e., the triangle formed by the vertices $v_i, v_j, v_k$.
A \textit{simplicial surface}~$\M$ is the \textit{geometric realization} of a triangular mesh $\T$, defined as the union of flat triangles in $\R^3$, i.e., $\M = \cup_{\tau\in\F}~ \tau$. Each triangle is determined by the positions of the three vertices associated with a face in the set $\mathcal{F}$.

A {\it simplicial mapping} $f: \M \to \mathbb{R}^2$ is a piecewise affine mapping that is completely determined by its values on the vertices, denoted by
$$
f_i \equiv f(v_i) = (f_i^1, f_i^2)^\top, \quad \text{for every $v_i\in\V$}.
$$
Collecting each component of these into a vector in $\mathbb{R}^{n}$ gives
\begin{equation}
\f^1 \equiv
\begin{bmatrix}
f_1^1 \\
\vdots\\
f_n^1 
\end{bmatrix}, \qquad
\f^2 = 
\begin{bmatrix}
f_1^2 \\
\vdots\\
f_n^2
\end{bmatrix} 
\label{eq:f_matrix}
\end{equation}
For any point $v \in \tau = [{v}_{i}, {v}_{j}, {v}_{k}]$, the restriction of $f$ to $\tau$ is expressed in barycentric form
$$
f|_{\tau}(v) = \lambda_i(\tau,v) \, f_i + \lambda_j(\tau,v) \, f_j + \lambda_k(\tau,v) \, f_k, 
$$
where
$$
\lambda_i(\tau,v) = \frac{|[v, v_j, v_k]|}{|\tau|}, \quad
\lambda_j(\tau,v) = \frac{|[v_i, v, v_k]|}{|\tau|}, \quad
\lambda_k(\tau,v) = \frac{|[v_i, v_j, v]|}{|\tau|},
$$
and $|\tau|=\int_\tau \,\d A$ denotes the area of the triangle $\tau$ (see Figure~\ref{fig:barycentric}). 
Equivalently, the mapping can be written using the piecewise linear hat functions $\varphi_i:\M\to\mathbb{R}$,
$$
f(v) = \sum_{i=1}^n \varphi_i(v)\, f_i,
\quad
\text{where}
\quad
\varphi_i(v) = 
\begin{cases}
\lambda_i(\tau,v), & v \in \tau, \\
0, & \text{otherwise}.
\end{cases}
$$
Therefore, $f(v)$ can be reformulated in matrix form as 
\[
f(v) = 
\begin{bmatrix}
{\f^1}^\top \Phi(v)  &  {\f^2}^\top \Phi(v) 
\end{bmatrix},
\qquad
\Phi(v) = \begin{bmatrix}
\varphi_1(v) \\
\vdots \\
\varphi_n(v)
\end{bmatrix} \in \mathbb{R}^n.
\]

A simplicial map $f$ is said to be authalic or area-preserving up to a global scaling if
\begin{equation} \label{eq:area-preserving}
|f(\tau)| = c \, |\tau|, ~~ \mbox{for all}~ \tau \in \mathcal{F},
\end{equation}
where $c>0$ is a global constant. 
In fact, the constant $c = \A(f) / |\M|$, in which $|\M|$ and $\A(f)$ are the areas of the surface and image, respectively, given by
\begin{equation}
|\M| = \sum_{\tau \in \F} |\tau| \quad~ \mbox{and} \quad~ \A(f) = \sum_{\tau \in \F} |f(\tau)|.
\end{equation}

\begin{figure}
\centering
\begin{tikzpicture}[black,thick,scale=1.5]
\coordinate (v_i) at (0,   0);
\coordinate (v_j) at (2,   0);
\coordinate (v_k) at (1,1.73);
\coordinate (v)   at (0.8,0.5);
\filldraw[chocolate!55] (v_i) -- (v_j) -- (v);
\filldraw[cadetblue!65] (v_j) -- (v_k) -- (v);
\filldraw[darktaupe!65] (v_k) -- (v_i) -- (v);
\draw{
(v_i) -- (v_j) -- (v_k) -- (v_i)
};
\draw[dashed, black]{
(v) -- (v_i)
};
\draw[dashed, black]{
(v) -- (v_j)
};
\draw[dashed, black]{
(v) -- (v_k)
};
\tikzstyle{every node}=[circle, draw, fill=lightgoldenrodyellow!30,
                        inner sep=1pt, minimum width=2pt]                  
\draw[black]{
(0,   0) node{${v}_{i}$}
(2,   0) node{${v}_{j}$}
(1,1.73) node{${v}_{k}$}
};
\draw[black]{
(0.8,0.5) node{$~v~$}
};
\end{tikzpicture}
\caption{An illustration of the barycentric coordinates on a triangular face.}
\label{fig:barycentric}
\end{figure}
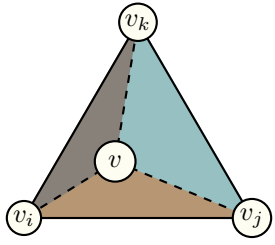

\section{Slicing method for square domains}
\label{sec:3}
A genus-zero or genus-one closed triangular mesh can be parameterized onto a square domain with appropriate boundary constraints after applying a suitable surface slicing. In this section, we propose a slicing method and specify the boundary conditions for the resulting square domain.

\subsection{Slicing closed surface}
To parameterize a closed surface onto a planar domain, we first introduce a cutting path on the surface. For a genus-zero surface, in order to minimize geometric distortion in the area-preserving parameterization, the path is chosen to follow the geodesic between the most distant pair of vertices. This pair can be approximated as the vertices with the maximum and minimum projections along the first principal component of all vertices.

Specifically, we compute the principal components matrix $R = [\r_1,~\r_2,~\r_3]$ via singular value decomposition for the vertex matrix,
\[
V = 
\begin{bmatrix}
   v_1^1  & v_1^2  &  v_1^3\\
   \vdots & \vdots & \vdots\\
   v_n^1  & v_n^2  &  v_n^3
\end{bmatrix} = U \Sigma R^\top.
\]
We then rotate the vertices by $\widetilde V = V R$ to align the principal components with the standard coordinate axes. The two most distant vertices along the first principal axis are identified as
\[
(v_m, v_M) \equiv \Big(\argmin_{1\leq i \leq n}  \widetilde v_{i}^1, \argmax_{1\leq i \leq n} \widetilde v_{i}^1 \Big),
\]
where $\widetilde v_{i}^j$ denotes the $(i,j)$th entry of $\widetilde V$. 
The geodesic path between $v_m$ and $v_M$ is then computed using the MATLAB function \texttt{shortestpath}, denoted as $\tp = \{m, \ldots, M \}$.

Next, given the cutting path $\tp$, we slice the triangular mesh along this path by duplicating the vertices on $\tp$ and updating the adjacent faces accordingly.
Formally, we first define the set of faces that contain exactly two vertices from the cutting path $\tp$ as
\begin{equation}
\F_{\tp}^2 = \left \{ \tau \in \F ~\big|~ v_i, v_j, v_k \in \tau,~~  i, j \in \tp, ~~ k \notin \mathtt{p} \right\}.
\label{eq:face_p2}
\end{equation}
We then classify these faces into two subsets based on their orientation relative to the tangent direction of the cutting path. Faces on the left side of the path are defined as
\begin{equation}
\F_\tp^\tL = \left \{ \tau \in \F_{\tp}^2 ~\big|~  \tau = [v_{\tp_i}, v_{\tp_{i+1}}, v_\ell] \right\},
\label{eq:face_p2_L}
\end{equation}
while faces on the right side are those with the opposite orientation:
\begin{equation}
\F_\tp^\tR = \left \{ \tau \in \F_\tp^2 ~\big|~  \tau = [v_{\tp_{i+1}}, v_{\tp_i}, v_\ell] \right\}.
\label{eq:face_p2_R}
\end{equation}

Moreover, we identify the set of faces that contain exactly one vertex on the cutting path:
\begin{equation}
\F_\tp^1 = \left \{  \tau \in \F ~\big|~ v_i, v_j, v_k \in \tau,~~ i \in \tp, ~~ j,k \notin \tp \right\}.
\label{eq:face_p1}
\end{equation}
Let $\V_{\tp}^\tL$ and $\V_{\tp}^\tR$ denote the sets of all vertices belonging to the left-side and right-side faces $\F_\tp^\tL$ and $\F_\tp^\tR$, respectively, excluding the vertices that lie on the path $\tp$. That is,
\begin{align}
\V_\tp^\tL &= \{ v_i \in \V ~\big|~ v_i \in \tau \in \F_\tp^\tL, ~ i\notin \tp \}, \label{eq:faceV_p1_L} \\
\V_\tp^\tR &= \{ v_i \in \V~\big|~ v_i \in \tau \in \F_\tp^\tR, ~ i\notin \tp \}. \label{eq:faceV_p1_R}
\end{align}
We then classify the faces in $\F_\tp^1$ according to their adjacency. A face is assigned to the left side if it contains a vertex belonging to $\V_\tp^\tL$, and to the right side if it contains a vertex belonging to $\V_\tp^\tR$. Formally,
\begin{align}
\F_\tp^\tL &\gets \F_\tp^\tL \cup \left \{ \tau \in \F_\tp^1 \setminus (\F_{\tp}^\tL \cup \F_\tp^\tR) ~\big|~ \tau \cap \V_\tp^\tL \neq \varnothing \right\},  \label{eq:face_p1_L}\\
\F_\tp^\tR &\gets \F_\tp^\tR\cup \left \{ \tau \in \F_\tp^1 \setminus (\F_\tp^\tL \cup \F_\tp^\tR) ~\big|~  \tau \cap \V_\tp^\tR \neq \varnothing\right\}. \label{eq:face_p1_R}
\end{align}
This process is repeated until all faces in $\F_\tp^1$ have been assigned, i.e.,
$$ \F_\tp^1 \setminus (\F_\tp^\tL \cup \F_\tp^\tR) = \varnothing.$$ 
It is important to alternate between the two steps \eqref{eq:face_p1_L} and \eqref{eq:face_p1_R} to ensure that all faces in $\F_\tp^1$ are correctly assigned to either $\F_\tp^\tL$ or $\F_\tp^\tR$.
Once the classification is complete, we duplicate the path vertices $v_\tp$ to create a corresponding set $\hat{v}_\tp$. For each face $\tau = [v_i, v_j, v_k] \in \F_\tp^\tR$ where $i \in \tp$, we replace $v_i$ with its duplicate $\hat{v}_i$. The complete slicing procedure is summarized in Algorithm~\ref{alg:slice_mesh}.

For genus-one closed surfaces, the cutting path is given by the union of the handle and tunnel loops, which can be extracted using the Reeb graph–based method of Dey et al.~\cite{DeLi08}. The mesh is then sliced open into a disk-like mesh by applying the slicing procedure described above to each of these loops.

As a result, we obtain a disk-like surface by cutting the original closed surface. For simplicity, we continue to denote the resulting triangular mesh as $\T = (\V, \E, \F)$, as defined in \eqref{eq:mesh}, now representing the simplicial open surface.

\begin{algorithm}[]
\caption{Slicing a triangular mesh along a path}
\label{alg:slice_mesh}
\begin{algorithmic}[1]
\Require A triangular mesh $\M = (\F, \V)$, cutting path $\mathtt{p}$
\Ensure Sliced triangular mesh $\hat{\M} = (\F, \hat \V)$
\State Compute adjacent face set $\F_\tp^2$ as in \eqref{eq:face_p2}
\State Split $\F_\tp^2$ into left and right subsets: $\F_\tp^\tL$ and $\F_\tp^\tR$ via \eqref{eq:face_p2_L}, \eqref{eq:face_p2_R}
\State Compute adjacent face set $\F_\tp^1$ as in \eqref{eq:face_p1}
\While{$\F_\tp^1 \setminus (\F_\tp^\tL \cup \F_\tp^\tR) \neq \varnothing$}
    \State Identify vertex sets $\V_\tp^\tL$, $\V_\tp^\tR$ via \eqref{eq:faceV_p1_L}, \eqref{eq:faceV_p1_R}
    \State Update $\F_\tp^\tL$ and $\F_\tp^\tR$ using \eqref{eq:face_p1_L}, \eqref{eq:face_p1_R}
\EndWhile
\State Duplicate each path vertex $v_i \in \mathtt{p}$ as $\hat{v}_i$
\State Replace $v_i \in \tp$ with $\hat{v}_i$ for all faces in $\F_\tp^\tR$
\end{algorithmic}
\end{algorithm}

\subsection{Boundary conditions of square domains}
After slicing closed genus-zero and genus-one surfaces, the resulting open surface is parameterized onto a unit square domain subject to appropriate boundary constraints, as illustrated in Figure~\ref{fig:plan_close_mesh}.

Let $\B = \{b \mid v_b \in \partial \M\}$ denote the set of boundary vertex indices. Among these, we identify four corner indices, denoted by $\B_i$, $\B_j$, $\B_k$, and $\B_\ell$, which are mapped to the corners of the unit square:
\begin{align*}
\f_{\B_i} &= (0,0) ,~~~ \f_{\B_j} = (1,0),\\
\f_{\B_k} &= (1,1) ,~~~ \f_{\B_\ell} = (0,1).
\end{align*}
Since the boundary indices are ordered cyclically, we assume without loss of generality that $i = 1$.
We partition the boundary vertices into four ordered sets corresponding to the square edges:
\begin{subequations} \label{eq:bdry_edge}
\begin{align}
\tE &= \{\B_{i}, \B_{i+1}, \ldots, \B_{j}\},\\
\tF &= \{\B_{j}, \B_{j+1}, \ldots, \B_{k}\},\\
\tG &= \{\B_{\ell}, \B_{\ell-1}, \ldots, \B_{k}\},\\
\tH &= \{\B_{i}, \B_{n_\B}, \B_{n_\B-1},\ldots, \B_{\ell}\},
\end{align}
\end{subequations}
where $n_\B$ is the number of boundary vertices.

We then impose boundary constraints to align these boundary segments with the square edges. For genus-zero surfaces, the conditions are:
\begin{subequations} \label{eq:bdry_cond}
\begin{align}
\f_{\tE}^2 &= \0, ~\quad \f_{\tH}^1 = \0, \quad \f_{\tG}^2 = \1,~~\f_{\tF}^1 = \1,\\
\f_{\tE}^1 &= \f_{\tH}^2 ,\quad \f_{\tG}^1  = \f_{\tF}^2,
\end{align}
while for genus-one surfaces, the constraints become:
\begin{align}
\f_{\tE}^2 &= \0, ~\quad \f_{\tH}^1 = \0, \quad \f_{\tG}^2 = \1, \quad \f_{\tF}^1 = \1,\\
\f_{\tE}^1 &= \f_{\tG}^1, \quad \f_{\tF}^2 = \f_{\tH}^2.
\end{align}
\end{subequations}

With the slicing complete and boundary constraints defined, we are able to compute an area-preserving parameterization by minimizing the authalic energy, as described in the following section.

\begin{figure}[]
\centering
\begin{tabular}{cc}
\begin{tikzpicture}
\tikzset{arrowMe/.style={postaction=decorate, decoration={markings, mark=at position .6 with {\arrow[very thick]{#1}}} }}
\def\r{1.7}
\tkzDefPoint(0,0){o}
\tkzDefPoint(45:\r){v1}\tkzDefPoint(135:\r){v2}\tkzDefPoint(225:\r){v3}\tkzDefPoint(315:\r){v4}
\draw[arrowMe=stealth](v2)--node[above=2pt]{\(b\)}(v1);
\draw[arrowMe=stealth](v4)--node[right]{\(b\)}(v1);
\draw[arrowMe=stealth](v3)--node[left]{\(a\)}(v2);
\draw[arrowMe=stealth](v3)--node[below]{\(a\)}(v4);
\tkzDrawPoints(v1,v2,v3,v4)
\end{tikzpicture} & 
\begin{tikzpicture}
\tikzset{arrowMe/.style={postaction=decorate, decoration={markings, mark=at position .6 with {\arrow[very thick]{#1}}} }}
\def\r{1.7}
\tkzDefPoint(0,0){o}
\tkzDefPoint(45:\r){v1}\tkzDefPoint(135:\r){v2}\tkzDefPoint(225:\r){v3}\tkzDefPoint(315:\r){v4}
\draw[arrowMe=stealth](v2)--node[above=2pt]{\(a\)}(v1);
\draw[arrowMe=stealth](v4)--node[right]{\(b\)}(v1);
\draw[arrowMe=stealth](v3)--node[left]{\(b\)}(v2);
\draw[arrowMe=stealth](v3)--node[below]{\(a\)}(v4);
\tkzDrawPoints(v1,v2,v3,v4)
\end{tikzpicture} \\
genus-zero surface & genus-one surface
\end{tabular}
\caption{Boundary constraints for square domain parameterizations of sliced genus-zero and genus-one surfaces.}
\label{fig:plan_close_mesh}
\end{figure}

\section{Authalic energy minimization}
\label{sec:4}
Authalic energy is a functional that quantifies the area distortion of a simplicial mapping. Minimizing this energy yields an area-preserving parameterization. In this section, we present its properties and the corresponding gradient formula.

\subsection{Authalic energy functional}
To quantify area distortion, Yueh \cite{Yueh23} introduces the \textit{stretch energy functional} for a simplicial mapping $f$, defined as
\[
E_\rS(f) = \sum_{\tau \in \F}\frac{|f(\tau)|^2}{|\tau|}.
\]
It was shown that $E_\rS(f)$ is greater than or equal to the image area $\A(f)$, with equality achieved when $f$ is area-preserving, under the assumption that the total area is preserved, i.e., $\A(f) = |\M|$. 
In practice, however, the total area can vary when boundary conditions are not fixed. To address this limitation, Liu and Yueh \cite{LiYu24} proposed the authalic energy functional, defined as
\begin{equation}
E_\rA(f) = \frac{|\M|}{\A(f)} E_\rS(f) - \A(f),
\label{eq:Ea}
\end{equation}
which relaxes the assumption of the image area, as shown in the following theorem.

\begin{theorem}[{{\cite[Theorem 1]{LiYu24}}}]
\label{thm:Ea_Cauchy}
Let $f$ be a simplicial mapping defined on $\M$. The authalc energy functional $E_\rA$ in \eqref{eq:Ea} satisfies $E_\rA \geq 0$, and equality holds if and only if $f$ is area-preserving.
\end{theorem}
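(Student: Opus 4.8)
The plan is to reduce the whole statement to a single application of the Cauchy--Schwarz inequality. Multiplying the definition \eqref{eq:Ea} through by $\A(f)>0$ gives
\[
\A(f)\,E_\rA(f) \;=\; |\M|\,E_\rS(f) - \A(f)^2 \;=\; \Big(\sum_{\tau\in\F} |\tau|\Big)\Big(\sum_{\tau\in\F}\frac{|f(\tau)|^2}{|\tau|}\Big) - \Big(\sum_{\tau\in\F}|f(\tau)|\Big)^2 .
\]
The first step is to write each summand $|f(\tau)|$ as the product $\sqrt{|\tau|}\cdot\big(|f(\tau)|/\sqrt{|\tau|}\big)$, which is legitimate because every triangle of a simplicial surface has positive area, $|\tau|>0$. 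Applying Cauchy--Schwarz to the vectors $\big(\sqrt{|\tau|}\big)_{\tau\in\F}$ and $\big(|f(\tau)|/\sqrt{|\tau|}\big)_{\tau\in\F}$ in $\R^m$ yields exactly $\A(f)^2 \le |\M|\,E_\rS(f)$, hence $\A(f)\,E_\rA(f)\ge 0$, and dividing by $\A(f)>0$ proves $E_\rA(f)\ge 0$.

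For the equality case, I would invoke the equality condition of Cauchy--Schwarz: equality holds if and only if the two vectors above are linearly dependent, i.e., there is a constant $c$ with $|f(\tau)|/\sqrt{|\tau|} = c\,\sqrt{|\tau|}$ for every $\tau\in\F$, equivalently $|f(\tau)| = c\,|\tau|$ for all $\tau\in\F$. This is precisely the area-preserving condition \eqref{eq:area-preserving}; summing it over $\F$ identifies $c = \A(f)/|\M|$, consistent with the constant recorded just after \eqref{eq:area-preserving}. Conversely, if $f$ is area-preserving then the two vectors are proportional, so equality holds and $E_\rA(f)=0$.

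There is no genuine obstacle here, only one point to keep straight: the functional $E_\rA$ is defined only when $\A(f)>0$, so the statement implicitly restricts to simplicial maps whose image has positive total area, and under that standing assumption the division by $\A(f)$ and the rearrangement above are valid. If one wished to allow folded or partially degenerate images, it suffices to note that $|f(\tau)|\ge 0$ for each $\tau$, so the Cauchy--Schwarz step still goes through verbatim. No smoothness, injectivity, or boundary hypotheses on $f$ are required --- the result is purely an algebraic inequality among the per-triangle areas $|\tau|$ and $|f(\tau)|$, which is exactly the gain obtained by passing from $E_\rS$ to $E_\rA$.
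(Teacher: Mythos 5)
Your proof is correct and takes essentially the same route as the paper's: both apply the Cauchy--Schwarz inequality to the factorization $|f(\tau)| = \sqrt{|\tau|}\cdot\big(|f(\tau)|/\sqrt{|\tau|}\big)$ to obtain $|\M|\,E_\rS(f)\ge \A(f)^2$, divide by $\A(f)$, and characterize equality by constancy of the ratios $|f(\tau)|/|\tau|$. Your handling of the equality direction and the standing assumption $\A(f)>0$ is slightly more explicit than the paper's, but the argument is identical.
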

\begin{proof}
By the Cauchy--Schwarz inequality, we have
\[
E_\rS(f) \, |\M| = \sum_{\tau \in \F}\frac{|f(\tau)|^2}{|\tau|} \sum_{\tau \in \F}|\tau| \geq \Big(\sum_{\tau \in \F} |f(\tau)| \Big)^2 = \A(f)^2.
\]
Dividing by $\A(f)$ and rearranging yields $E_\rA(f) \geq 0$. Equality occurs when 
$\frac{|f(\tau)|}{|\tau|} = c$ is constant for all $\tau \in \F$, 
which characterizes an area-preserving map.
\end{proof}

Although Theorem~\ref{thm:Ea_Cauchy} ensures that minimizing $E_\rA$ enforces area preservation, achieving $E_\rA = 0$ exactly is practically unattainable. This naturally raises the question of whether sufficiently reducing $E_\rA$ guarantees acceptable preservation. In fact, $E_\rA$ characterizes the area-weighted variance of per-triangle area ratio, as established in the following theorem.

\begin{theorem} \label{thm:Ea_var}
Let the area ratio on each triangular face $\tau \in \F$ be defined as
\begin{equation}
r_f(\tau) = \frac{|f(\tau)|}{|\tau|}
\label{eq:area_ratio}
\end{equation}
Then, the mean and variance of $r_f(\tau)$ with respect to the area weights $\tfrac{|\tau|}{|\M|}$ are given by
\begin{align}
\widetilde \mu (r_f) &\equiv \sum_{\tau\in\mathcal{F}} \frac{|\tau|}{|\M|} r_f(\tau)  =  \frac{\A(f)}{|\M|}, \label{eq:area_weight_mean} \\
\widetilde  s^2 (r_f) &\equiv  \sum_{\tau \in \F} \frac{|\tau|}{|\M|} \big( r_f(\tau) - \widetilde \mu(r_f) \big)^2 
= \frac{\A(f)}{|\M|^2} E_\rA(f),
\label{eq:area_weight_var}
\end{align}
\end{theorem}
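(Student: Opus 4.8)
The plan is to establish both identities by direct computation, the only real ingredients being the normalization $\sum_{\tau\in\F}\tfrac{|\tau|}{|\M|}=1$ (which is just $|\M|=\sum_{\tau\in\F}|\tau|$) and the standard variance decomposition. First I would handle the mean \eqref{eq:area_weight_mean}: substituting $r_f(\tau)=|f(\tau)|/|\tau|$ into $\sum_{\tau\in\F}\tfrac{|\tau|}{|\M|}r_f(\tau)$, the factors $|\tau|$ cancel and what remains is $\tfrac{1}{|\M|}\sum_{\tau\in\F}|f(\tau)|=\A(f)/|\M|$ by the definition of $\A(f)$. This is immediate.

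For the variance \eqref{eq:area_weight_var}, I would expand the square and use the weight identity $\sum_{\tau\in\F}\tfrac{|\tau|}{|\M|}=1$ together with the mean identity just proved to collapse the cross term and the constant term, giving $\widetilde s^2(r_f)=\sum_{\tau\in\F}\tfrac{|\tau|}{|\M|}\,r_f(\tau)^2-\widetilde\mu(r_f)^2$. The key observation is that the remaining second moment is exactly the stretch energy rescaled: $\sum_{\tau\in\F}\tfrac{|\tau|}{|\M|}\,r_f(\tau)^2=\tfrac{1}{|\M|}\sum_{\tau\in\F}\tfrac{|f(\tau)|^2}{|\tau|}=E_\rS(f)/|\M|$. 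Combining this with $\widetilde\mu(r_f)^2=\A(f)^2/|\M|^2$ yields $\widetilde s^2(r_f)=\tfrac{1}{|\M|^2}\big(|\M|\,E_\rS(f)-\A(f)^2\big)$.

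To finish, I would rearrange the definition \eqref{eq:Ea} of the authalic energy into the form $\A(f)\,E_\rA(f)=|\M|\,E_\rS(f)-\A(f)^2$ and substitute, obtaining $\widetilde s^2(r_f)=\tfrac{\A(f)}{|\M|^2}E_\rA(f)$, as claimed. The argument is entirely elementary; there is no genuine obstacle, only the bookkeeping of keeping the $|\M|$-normalization consistent across the mean, the second moment, and the definition of $E_\rA$. It is worth remarking that this computation re-proves Theorem~\ref{thm:Ea_Cauchy} as a byproduct: a weighted variance is nonnegative and vanishes precisely when $r_f(\tau)$ is constant over all $\tau\in\F$, so $E_\rA(f)=\tfrac{|\M|^2}{\A(f)}\,\widetilde s^2(r_f)\geq 0$ with equality if and only if $f$ is area-preserving.
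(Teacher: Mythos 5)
Your proposal is correct and follows essentially the same route as the paper's proof: both compute the weighted mean directly, reduce the weighted variance to the second moment minus the squared mean, identify the second moment with $E_\rS(f)/|\M|$, and substitute the definition of $E_\rA$. Your closing remark that the identity re-derives Theorem~\ref{thm:Ea_Cauchy} as a byproduct is a nice observation, but it does not change the substance of the argument.
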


\begin{proof}
By definition, the stretch energy $E_\rS(f)$ and the image area $\A(f)$ can be expressed in terms of the area ratios $r_f(\tau)$ as
\begin{align*}
E_\rS(f) &= \sum_{\tau \in \F} \frac{|f(\tau)|^2}{|\tau|} = \sum_{\tau \in \F} \bigg( \frac{|f(\tau)|}{|\tau|} \bigg)^2 |\tau| \\
&= \sum_{\tau \in \F} r_f(\tau)^2  |\tau| = |\M| \bigg( \sum_{\tau \in \F} \frac{|\tau|}{|\M|}r_f(\tau)^2 \bigg) = |\M|\, \widetilde \mu (r_f^2), \\
\A(f)&= \sum_{\tau \in \F} |f(\tau)| = \sum_{\tau \in \F} \bigg(  \frac{|f(\tau)|}{|\tau|} \bigg) |\tau| = \sum_{\tau \in \F} r_f(\tau)  |\tau| \\
&= |\M| \bigg( \sum_{\tau \in \F} \frac{|\tau|}{|\M|} r_f(\tau) \bigg)  = |\M|\,\widetilde  \mu (r_f). 
\end{align*}
It follows that the variance of the area ratios with area weights is
\begin{align*}
\widetilde s^2 (r_f) &= \sum_{\tau \in \F} \frac{|\tau|}{|\M|} \big( r_f(\tau) - \widetilde \mu(r_f) \big)^2 
= \widetilde \mu (r_f^2) - \widetilde \mu(r_f)^2 \\
&= \frac{E_\rS(f)}{|\M|} - \frac{\A(f)^2}{|\M|^2} 
= \frac{\A(f)}{|\M|^2} \Big( \frac{|\M|}{\A(f)}E_\rS(f) - \A(f) \Big) = \frac{\A(f)}{|\M|^2} E_\rA(f).
\end{align*}
\end{proof}

In numerical experiments, area distortion is typically measured by the standard deviation of the area ratios, i.e., the square root of their variance. The following result shows that this variance is also bounded above in terms of $E_\rA$.

\begin{theorem} \label{thm:Ea_unwvar}    
Let $r_f(\tau)$ denote the area ratio of a triangular face $\tau \in \F$, as defined in \eqref{eq:area_ratio}, and let $s^2(r_f)$ denote the (unweighted) variance of $ r_f(\tau) $. Then
\[
s^2 (r_f) \leq  \frac{\A(f)}{m\,|\M| \displaystyle \min_{\tau \in \F}|\tau|} E_\rA(f),
\]
where $m = |\F|$ is the number of faces of $\M$.
\end{theorem}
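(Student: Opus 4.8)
The plan is to compare the unweighted variance $s^2(r_f)$ with the area-weighted variance $\widetilde s^2(r_f)$ from Theorem~\ref{thm:Ea_var}, using the fact that the weights $\tfrac{|\tau|}{|\M|}$ are bounded below by $\tfrac{\min_\tau|\tau|}{|\M|}$. The key observation is that although the two variances use different weights and, in principle, different means, the unweighted variance around its own mean $\mu(r_f) = \tfrac{1}{m}\sum_\tau r_f(\tau)$ is bounded by the unweighted mean-square deviation around \emph{any} reference value; in particular around the weighted mean $\widetilde\mu(r_f)$. That is, $s^2(r_f) = \tfrac{1}{m}\sum_{\tau}(r_f(\tau)-\mu(r_f))^2 \le \tfrac{1}{m}\sum_{\tau}(r_f(\tau)-\widetilde\mu(r_f))^2$, since the sample mean minimizes the sum of squared deviations.

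Next I would pass from the uniform weights $\tfrac1m$ to the area weights by inserting the bound $\tfrac1m \le \tfrac{|\M|}{m\,\min_{\tau}|\tau|}\cdot\tfrac{|\tau|}{|\M|}$, valid termwise because $|\tau| \ge \min_{\tau}|\tau|$. This gives
\[
s^2(r_f) \;\le\; \frac{1}{m}\sum_{\tau\in\F}\big(r_f(\tau)-\widetilde\mu(r_f)\big)^2 \;\le\; \frac{|\M|}{m\,\min_{\tau\in\F}|\tau|}\sum_{\tau\in\F}\frac{|\tau|}{|\M|}\big(r_f(\tau)-\widetilde\mu(r_f)\big)^2 \;=\; \frac{|\M|}{m\,\min_{\tau\in\F}|\tau|}\,\widetilde s^2(r_f).
\]
Finally I substitute the identity $\widetilde s^2(r_f) = \tfrac{\A(f)}{|\M|^2}E_\rA(f)$ from \eqref{eq:area_weight_var}, and the $|\M|$ in the numerator cancels one factor of $|\M|$ in the denominator, yielding exactly $s^2(r_f) \le \tfrac{\A(f)}{m\,|\M|\min_{\tau}|\tau|}E_\rA(f)$.

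The only genuinely non-routine step is the first inequality, $s^2(r_f)\le \tfrac1m\sum_\tau(r_f(\tau)-\widetilde\mu(r_f))^2$; everything else is bookkeeping. This step is the standard fact that $t\mapsto \sum_\tau (r_f(\tau)-t)^2$ is minimized at $t=\mu(r_f)$, which follows by expanding the square (the cross term vanishes at the mean) or by a one-line convexity/calculus argument. I anticipate no real obstacle — the main thing to be careful about is not conflating the two means, and making sure the termwise weight comparison is applied inside the sum rather than after collapsing it. One could alternatively phrase the whole argument by writing $s^2(r_f) = \mu(r_f^2)-\mu(r_f)^2$ and bounding $\mu(r_f^2) \le \tfrac{|\M|}{m\min_\tau|\tau|}\widetilde\mu(r_f^2)$ together with $\mu(r_f)^2 \ge$ (something), but the minimizing-mean route is cleaner and avoids needing a lower bound on $\mu(r_f)^2$.
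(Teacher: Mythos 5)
Your proposal is correct and follows essentially the same route as the paper's proof: bound $s^2(r_f)$ by the unweighted mean-square deviation about the weighted mean $\widetilde\mu(r_f)$ (using that the sample mean minimizes the sum of squared deviations), then compare the uniform weights $\tfrac{1}{m}$ to the area weights via $|\tau|\ge\min_{\tau\in\F}|\tau|$, and finally substitute \eqref{eq:area_weight_var}. No gaps; the only difference is presentational.
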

\begin{proof}
The (unweighted) mean of the area ratios is defined as
$$\mu (r_f) \equiv \frac{1}{m} \sum_{\tau \in \F} r_f(\tau).$$
From the fact that $\mu(r_f) = \argmin_x \sum_{\tau \in \F} (r_f(\tau) - x)^2$, we have
\begin{align*}
s^2 (r_f)  
&= \frac{1}{m}\sum_{\tau \in \F} \big( r_f(\tau) - \mu(r_f) \big)^2 
\leq \frac{1}{m }\sum_{\tau \in \F} \big( r_f(\tau) - \widetilde \mu (r_f) \big)^2 \\
&\leq \frac{|\M|}{m \displaystyle \min_{\tau \in \F}|\tau|}\sum_{\tau \in \F}  \frac{|\tau|}{|\M|} \big( r_f(\tau) -  \widetilde \mu (r_f) \big)^2
=  \frac{|\M|}{m \displaystyle \min_{\tau \in \F}|\tau|} \widetilde s^2 (r_f),
\end{align*}
where $\widetilde \mu(r_f)$ and $\widetilde s^2(r_f)$ are the area-weighted mean and variance, as defined in \eqref{eq:area_weight_mean} and \eqref{eq:area_weight_var}, respectively.
As a result, by Theorem~\ref{thm:Ea_var}, we obtain the claimed inequality:
\[
s^2 (r_f) \leq \frac{|\M|}{m \displaystyle \min_{\tau \in \F}|\tau|} \widetilde s^2 (r_f) = \frac{\A(f)}{m \, |\M|\displaystyle \min_{\tau \in \F}|\tau|} E_\rA(f).
\]
\end{proof}

These results not only indicate that it is reasonable to characterize the variance of area ratios using the authalic energy, but also explain that the variance tends to decrease as the authalic energy diminishes, as confirmed numerically in Section~\ref{sec:6.2}.

\begin{figure}[]
\centering
\begin{tikzpicture}[thick,scale=1.5]
\coordinate (v_i) at (0,0);
\coordinate (v_j) at (0,2);
\coordinate (v_k) at (2,1);
\coordinate (v_l) at (-2,1);
\filldraw[camel!60] (v_i) -- (v_j) -- (v_k);
\filldraw[camel!60] (v_i) -- (v_j) -- (v_l);
\pic[draw, ->, "$\theta_{i,j}^k(f)$", angle eccentricity=2.3, angle radius=0.5cm]{angle = v_i--v_l--v_j};
\pic[draw, ->, "$\theta_{j,i}^\ell(f)$", angle eccentricity=2.3, angle radius=0.5cm]{angle = v_j--v_k--v_i};
\draw{
(v_i) -- (v_j) -- (v_k) -- (v_i) -- (v_l) -- (v_j)
};
\tikzstyle{every node}=[circle, draw, fill=lightgoldenrodyellow!30,
                        inner sep=1pt, minimum width=2pt]
\draw{
(0,0) node{$f_i$}
(0,2) node{$f_j$}
(2,1) node{$f_\ell$}
(-2,1) node{$f_k$}
};
\end{tikzpicture}
\caption{An illustration of the angle $\theta_{i,j}^k(f)$ and $\theta_{i,j}^\ell(f)$ defined on the image $f(\M)$.}
\label{fig:cot}
\end{figure}
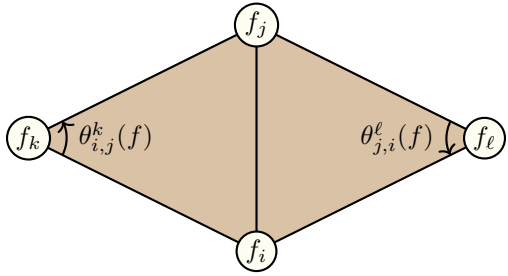

\subsection{Gradient under boundary conditions}
For the square-domain parameterization, the image area $\A(f)$ is equal to $1$. Therefore, minimizing energy $E_\rA(f)$ is equivalent to minimizing energy $E_\rS(f)$.
Using the matrix representation of the simplicial map $f$ in \eqref{eq:f_matrix}, the energy $E_\rS$ is reformulated as 
\begin{equation}
E_\rS(f)  = \frac{1}{2} \sum_{s = 1}^2 {\f^s}^\top L_\rS(f) \, \f^s
\label{eq:Es}
\end{equation}
where $L_\rS(f)$ is the weighted Laplacian matrix, defined by
\begin{equation} \label{eq:Ls}
L_\rS(f)_{i, j} = 
\begin{cases}
    \displaystyle -\sum_{\tau = [v_i,v_j,v_k]} \frac{\cot(\theta_{i, j}^k(f))}{2} \frac{ |f(\tau)|}{|\tau|}   &\text{if $[v_i, v_j] \in \mathcal{E}$,} \\[0.5em]
    \displaystyle-\sum_{\ell \neq i} L_\rS(f)_{i, \ell}     &\text{if $j=i$,} \\
    0       &\text{otherwise},
\end{cases}
\end{equation}
with $\theta_{i, j}^k(f)$ denoting the angle opposite edge $[\f_i, \f_j]$ at vertex $\f_k$ (see Figure~\ref{fig:cot}), as shown in \cite[Lemma 3.1]{Yueh23}.
Furthermore, the gradient of $E_\rS$ with respect to $\f^s$ is given by
\begin{equation}
\nabla_{\f^s} E_\rS(f) = 2\, L_\rS(f) \,\f^s,~~~s = 1,2,
\label{eq:dEs}
\end{equation}
as stated in \cite[Theorem 3.5]{Yueh23}.

Let the index set of interior vertices be $\I = \{1, \ldots, n\} \setminus \B$. By reordering the vectors $\f^s$ and the matrix $L_\rS(f)$ accordingly, we can write
\[
L_\rS(f) = 
\begin{bmatrix}
    L_\rS(f)_{\I,\I}  &  L_\rS(f)_{\I,\B}\\
    L_\rS(f)_{\B,\I}  &  L_\rS(f)_{\B,\B}
\end{bmatrix}, \qquad
\f^s = 
\begin{bmatrix}
\f_\I^s \\
\f_\B^s
\end{bmatrix},
\]
for $s = 1,2$. By \eqref{eq:dEs}, the gradient of $E_\rS(f)$ with respect to the interior and boundary mappings is then given by
\begin{subequations}\label{eq:grad_Es}
\begin{align}
\nabla_{\f_\I^s} E_\rS &= 2\, \big( L_\rS(f)_{\I,\I} \, \f_\I^s + L_\rS(f)_{\I,\B} \, \f_\B^s \big), \label{eq:dEs_VI} \\
\nabla_{\f_\B^s} E_\rS &= 2\, \big( L_\rS(f)_{\B,\I} \, \f_\I^s + L_\rS(f)_{\B,\B} \, \f_\B^s \big), \label{eq:dEs_VB}
\end{align}
for $s = 1,2$.

To enforce the boundary constraints in \eqref{eq:bdry_cond}, we treat $\f_{\tE}^1$ and $\f_{\tF}^2$ as variables, while updating $\f_{\tG}^1$ and $\f_{\tH}^2$ according to the boundary constraints. Define the complements $\tE^c = \{1, \ldots, n\} \setminus \tE$, and similarly for $\tF^c$, $\tG^c$, and $\tH^c$.
For genus-zero surfaces, the gradients of $E_\rS$ with respect to $\f_{\tE}^1$ and $\f_{\tF}^2$ are given by:
\begin{align}
\nabla_{\f_{\tE}^1} E_\rS(f) &= 2\, \big( L_\rS(f)_{\tE, \tE^c} \, \f_{\tE^c}^1 + L_\rS(f)_{\tE,\tE} \, \f_{\tE}^1 \big) \nonumber\\
&+ 2\, \big( L_\rS(f)_{\tH, \tH^c} \, \f_{\tH^c}^2 + L_\rS(f)_{\tH,\tH} \, \f_{\tH}^2 \big),\\
\nabla_{\f_{\tF}^2} E_\rS(f) &= 2\, \big( L_\rS(f)_{\tF, \tF^c} \, \f_{\tF^c}^2 + L_\rS(f)_{\tF,\tF} \, \f_{\tF}^2 \big) \nonumber\\
&+ 2\, \big( L_\rS(f)_{\tG, \tG^c} \, \f_{\tG^c}^1 + L_\rS(f)_{\tG,\tG} \, \f_{\tG}^1 \big).
\end{align}
For genus-one surfaces, the gradients become:
\begin{align}
\nabla_{\f_{\tE}^1} E_\rS(f) &= 2\, \big( L_\rS(f)_{\tE, \tE^c} \, \f_{\tE^c}^1 + L_\rS(f)_{\tE,\tE} \, \f_{\tE}^1 \big) \nonumber\\
&+ 2\, \big( L_\rS(f)_{\tG, \tG^c} \, \f_{\tG^c}^1 + L_\rS(f)_{\tG,\tG} \, \f_{\tG}^1 \big),\\
\nabla_{\f_{\tF}^2} E_\rS(f) &= 2\, \big( L_\rS(f)_{\tF, \tF^c} \, \f_{\tF^c}^2 + L_\rS(f)_{\tF,\tF} \, \f_{\tF}^2 \big)\nonumber\\
&+ 2\, \big( L_\rS(f)_{\tH, \tH^c} \, \f_{\tH^c}^2 + L_\rS(f)_{\tH,\tH} \, \f_{\tH}^2 \big).
\end{align}
\end{subequations}
With these gradient formulations, we can apply an optimization method to minimize $E_\rS$.

\section{Energy minimization methods}
\label{sec:5}
In Section~\ref{sec:4}, we established the area-preservation properties of the authalic energy $E_\rA$. For the square parameterization, we may normalize the surface area $|\M|$ so that $|\M| = \A(f) = 1$, and minimizing $E_\rA$ is equivalent to minimizing the stretch energy $E_\rS(f)$. In this section, we present an efficient algorithm for minimizing $E_\rS(f)$ subject to the boundary constraint \eqref{eq:bdry_cond}.

\subsection{Fixed-point method for initial map}
In the work that first proposed the stretch energy \cite{YuLW19}, they minimize the energy by the fixed-point method, which is based on approximating the critical point with respect to the interior map $\f_\I^s$ for $s = 1,2$. In particular, with a fixed boundary map $\f_\B^s$, the fixed-point method iteratively solves the linear system:
\begin{equation}
L_\rS(f^{(k)})_{\I, \I} {\f_\I^s}^{(k+1)} = \,L_\rS(f^{(k)})_{\I, \B}\, \f_\B^s, ~~ s = 1,2, 
\label{eq:FPM_Es}
\end{equation}
until convergence.

As expected, the fixed-point method updates the map by minimizing a sequence of quadratic functions that locally approximate $E_\rS$. This yields second-order accuracy with respect to energy variation, as stated below.

\begin{theorem} \label{thm:FPM_quad}
Let $f$ be a simplicial map, and let ${\f^{(k)}}$ denote the sequence generated by the fixed-point method \eqref{eq:FPM_Es}. Then, 
\[
\left| E_\rS(f^{(k+1)}) - E_\rS(f^{(k)}) \right| = \O \left( \|\f^{(k+1)} - {\f}^{(k)} \|^2_F \right).
\]
\end{theorem}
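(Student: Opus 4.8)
The plan is to exploit the fact that the fixed-point iteration \eqref{eq:FPM_Es} is exactly the stationarity condition of the quadratic form associated with the \emph{frozen} Laplacian $L_\rS(f^{(k)})$. Concretely, define the auxiliary quadratic functional
\[
Q^{(k)}(\g) = \frac{1}{2}\sum_{s=1}^2 {\g^s}^\top L_\rS(f^{(k)}) \,\g^s,
\]
so that $E_\rS(f^{(k)}) = Q^{(k)}(\f^{(k)})$ and, by the definition of $E_\rS$ in \eqref{eq:Es} and of $L_\rS$ in \eqref{eq:Ls}, also $E_\rS(f^{(k+1)})$ differs from $Q^{(k)}(\f^{(k+1)})$ only through the change in the area-ratio weights $|f(\tau)|/|\tau|$ that enter $L_\rS$. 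The first step is therefore to write
\[
E_\rS(f^{(k+1)}) - E_\rS(f^{(k)})
= \big( E_\rS(f^{(k+1)}) - Q^{(k)}(\f^{(k+1)}) \big) + \big( Q^{(k)}(\f^{(k+1)}) - Q^{(k)}(\f^{(k)}) \big),
\]
and bound the two brackets separately.

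For the second bracket, I would use that $\f^{(k+1)}$ is the minimizer of $Q^{(k)}$ over the affine subspace fixing the boundary data $\f_\B^s$: the interior block solves \eqref{eq:FPM_Es}, which is precisely $\nabla_{\f_\I^s} Q^{(k)} = 0$ by \eqref{eq:dEs_VI} with the frozen Laplacian. Since $Q^{(k)}$ is a quadratic whose gradient vanishes at $\f^{(k+1)}$ in the interior directions, a second-order Taylor expansion of $Q^{(k)}$ about $\f^{(k+1)}$ gives
\[
Q^{(k)}(\f^{(k)}) - Q^{(k)}(\f^{(k+1)})
= \tfrac12 \sum_{s=1}^2 \big(\f^{(k)}_\I - \f^{(k+1)}_\I\big)^\top L_\rS(f^{(k)})_{\I,\I} \big(\f^{(k)}_\I - \f^{(k+1)}_\I\big),
\]
with no linear term (the boundary components of $\f^{(k)}$ and $\f^{(k+1)}$ coincide). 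This is manifestly $\O(\|\f^{(k+1)}-\f^{(k)}\|_F^2)$, provided the entries of $L_\rS(f^{(k)})$ stay bounded along the iteration — which holds because the cotangent weights and area ratios are controlled on a fixed, nondegenerate mesh, so $\|L_\rS(f^{(k)})\|$ is uniformly bounded.

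For the first bracket, I would compare $E_\rS(f^{(k+1)})$, evaluated with the true weights $|f^{(k+1)}(\tau)|/|\tau|$, against $Q^{(k)}(\f^{(k+1)})$, which uses the old weights $|f^{(k)}(\tau)|/|\tau|$. Face areas $\tau \mapsto |f(\tau)|$ are smooth (quadratic) functions of the vertex coordinates $\f$, hence Lipschitz on bounded sets, so $\big| |f^{(k+1)}(\tau)| - |f^{(k)}(\tau)| \big| = \O(\|\f^{(k+1)}-\f^{(k)}\|_F)$ for each $\tau$; likewise $|f^{(k+1)}(\tau)|^2$ is $\O(\|\f^{(k+1)}-\f^{(k)}\|_F)$-close to $|f^{(k)}(\tau)|^2$. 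Summing $|f(\tau)|^2/|\tau|$ over the finitely many faces shows $E_\rS(f^{(k+1)}) - Q^{(k)}(\f^{(k+1)}) = \O(\|\f^{(k+1)}-\f^{(k)}\|_F)$ — only \emph{first} order. To upgrade this to second order I would observe that the difference $E_\rS(f^{(k+1)}) - Q^{(k)}(\f^{(k+1)})$ is, up to sign and a factor, the same object one gets by comparing $E_\rS$ with its quadratic surrogate; using that $L_\rS(f)$ is (half) the gradient operator of $E_\rS$ — i.e. $E_\rS(\g) = \tfrac12\sum_s {\g^s}^\top L_\rS(\g)\g^s$ and $\nabla E_\rS = 2L_\rS(f)\f$ — one checks that the first-order terms in $\f^{(k+1)}-\f^{(k)}$ cancel, leaving a second-order remainder. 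The combination of the two brackets then yields the claimed estimate. I expect this last cancellation — showing that the surrogate $Q^{(k)}$ is a \emph{second-order} (not merely first-order) model of $E_\rS$ at $\f^{(k)}$, which is where the specific structure of $L_\rS$ as the energy's own gradient operator is essential — to be the main obstacle; the boundedness of $L_\rS$ along the iterates and the smoothness of the area functionals are routine.
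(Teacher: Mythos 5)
Your skeleton is essentially the paper's: a quadratic surrogate built from the frozen Laplacian $L_\rS(f^{(k)})$, the observation that the fixed-point iterate is its interior critical point, and a two-bracket decomposition of the energy difference. The second bracket is handled correctly. The gap is in the first bracket, and you have correctly located it as the crux but misidentified the mechanism. The cancellation you invoke does not occur: since $\nabla_{\f^s} E_\rS(f^{(k)}) = 2L_\rS(f^{(k)}){\f^s}^{(k)}$ while your frozen quadratic has gradient $L_\rS(f^{(k)}){\f^s}^{(k)}$ at the same point, $Q^{(k)}$ matches the value but only \emph{half} the gradient of $E_\rS$ at $\f^{(k)}$; it is a first-order model in general directions, not a second-order one. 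Writing $\bd = \f^{(k+1)}-\f^{(k)}$ and $h = \|\bd\|_F$, the Taylor expansion gives
\[
E_\rS(f^{(k+1)}) - Q^{(k)}(\f^{(k+1)}) \;=\; \sum_{s=1}^{2} {{\f^s}^{(k)}}^\top L_\rS(f^{(k)})\,\bd^s \;+\; \O(h^2),
\]
and the linear term survives. What closes the argument --- the step missing from your outline --- is that this residual is itself second order \emph{because of the fixed-point equation}: since $\bd_\B = \0$ and $\big(L_\rS(f^{(k)})\,\f^{(k+1)}\big)_\I = \0$, one has $\big(L_\rS(f^{(k)})\,{\f^s}^{(k)}\big)_\I = -L_\rS(f^{(k)})_{\I,\I}\,\bd^s_\I$, hence
\[
{{\f^s}^{(k)}}^\top L_\rS(f^{(k)})\,\bd^s \;=\; {\bd^s_\I}^\top \big(L_\rS(f^{(k)})\,{\f^s}^{(k)}\big)_\I \;=\; -\,{\bd^s_\I}^\top L_\rS(f^{(k)})_{\I,\I}\,\bd^s_\I \;=\; \O(h^2).
\]
This is precisely the orthogonality the paper exploits in its step \eqref{eq:5.4.1} (its surrogate $\Q^{(k)}$ in \eqref{eq:quad_fun} carries a quadratic coefficient $2$ rather than $\tfrac12$, but the mechanism is identical: the leftover linear term is annihilated by $(L_\rS(f^{(k)})\f^{(k+1)})_\I = \0$ together with $\bd_\B = \0$). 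So the structure of $L_\rS$ as the energy's gradient operator is not sufficient on its own; you must invoke the fixed-point relation a second time, on the first bracket, exactly as you already did on the second. With that one addition your argument is complete and coincides with the paper's.
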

\begin{proof}
We denote $h = \|\f - {\f}^{(k)} \|_F$. By the Taylor expansion of $E_\rS(f)$ at $\f^{(k)}$ and \eqref{eq:dEs}, we have 
\begin{align}
E_\rS(f) &= \frac{1}{2} \sum_{s = 1}^{2} {\f^s}^\top L_\rS(f) \, \f^s  \nonumber\\
&=  \frac{1}{2} \sum_{s = 1}^{2} {{\f^s}^{(k)}}^\top L_\rS(f^{(k)}) \, {\f^s}^{(k)}  + 2\sum_{s = 1}^{2}(\f^s - {\f^s}^{(k)})^\top L_\rS(f^{(k)}) \, {\f^s}^{(k)} + \O ( h^2 ). \label{eq:Taylor_Es}
\end{align}   
The linear term can be rewritten as 
\begin{align*}
(\f^s - {\f^s}^{(k)})^\top &L_\rS(f^{(k)}) \, {\f^s}^{(k)}  =   {\f^s}^\top L_\rS(f^{(k)}) \, {\f^s}^{(k)} - {{\f^s}^{(k)}}^\top L_\rS(f^{(k)}) \, {\f^s}^{(k)} \\
&= {\f^s}^\top L_\rS(f^{(k)}) \, \f^s + {\f^s}^\top L_\rS(f^{(k)}) ({\f^s}^{(k)} - \f^s)- {{\f^s}^{(k)}}^\top L_\rS(f^{(k)}) \, {\f^s}^{(k)}.
\end{align*}
We define the quadratic functional as
\begin{equation}
\Q^{(k)}(f) =  2 \sum_{s = 1}^{2} {\f^s}^\top L_\rS(f^{(k)}) \, \f^s -\frac{3}{2} \sum_{s = 1}^{2} {{\f^s}^{(k)}}^\top L_\rS(f^{(k)}) \, {\f^s}^{(k)}.
\label{eq:quad_fun}
\end{equation}
Then, substituting \eqref{eq:quad_fun} into equation~\eqref{eq:Taylor_Es}, we obtain
\begin{equation} \label{eq:5.4}
E_\rS(f) - \Q^{(k)}(f) = 2\sum_{s = 1}^{2} {\f^s}^\top L_\rS(f^{(k)}) ({\f^s}^{(k)} - \f^s ) + \O(h^2).
\end{equation}

The interior mapping $\f_\I^{(k+1)}$ generated by the fixed-point method \eqref{eq:FPM_Es} is a critical point of $\Q^{(k)}(f)$ with respect to $\f_\I$ and satisfies $(L_\rS(f^{(k)}) {\f^s}^{(k+1)})_\I = \0$. We let the direction vector $\bd^{(k)} = {\f^s}^{(k+1)} - {\f^s}^{(k)}$. Since $\f_\B$ is fixed, $\bd^{(k)}_\B=\0$. It follows that
\begin{align}
({\f^s}^{(k)} - {\f^s}^{(k+1)})^\top &L_\rS(f^{(k)})  \,{\f^s}^{(k+1)}  
= - {\bd^{(k)}}^\top  L_\rS(f^{(k)})  \,{\f^s}^{(k+1)} \nonumber \\
&=-{\bd^{(k)}_\I}^\top  \big( L_\rS(f^{(k)}) \,{\f^s}^{(k+1)} \big)_\I  -  {\bd^{(k)}_\B}^\top \big( L_\rS(f^{(k)}) \,{\f^s}^{(k+1)} \big)_\B 
= \0. \label{eq:5.4.1}
\end{align}
By substituting \eqref{eq:5.4.1} into \eqref{eq:5.4}, we obtain
\begin{equation}
E_\rS(f^{(k+1)}) - \Q^{(k)}(f^{(k+1)}) = \O (h^2).  \label{eq:Es_and_Q}
\end{equation}
Consequently, by using \eqref{eq:Es_and_Q} and the fact that $\Q^{(k)}(f^{(k)}) = E_\rS(f^{(k)})$, we obtain
\begin{align*}
E_\rS(f^{(k)}) - E_\rS(f^{(k+1)}) 
&= \left( E_\rS(f^{(k)}) -  \Q^{(k)}(f^{(k+1)}) \right) + \left(  \Q^{(k)}(f^{(k+1)}) - E_\rS(f^{(k+1)})  \right) \\
&= \underbrace{\left( \Q^{(k)}(f^{(k)}) -  \Q^{(k)}(f^{(k+1)}) \right)}_{\O(h^2)} + \O(h^2) 
= \O(h^2),
\end{align*}
which concludes the desired result.
\end{proof}

The quadratic approximation property of the fixed-point method is illustrated in Figure~\ref{fig:Es_and_Q}, where, for visualization, we set the interior set $\I$ to consist of only a single vertex. In each iteration, the fixed-point method approximates $E_\rS$ by a quadratic function $\Q^{(k)}$ in \eqref{eq:quad_fun} that matches both the function value and gradient direction at $\f_\I^{(k)}$.

It partially explains the numerical behavior that the energy decreases significantly during the initial iterations, but the improvement diminishes rapidly thereafter. It also confirms the numerical observation that the gradient descent method for $E_\rS$, which follows the local steepest descent direction of $\Q^{(k)}$, is less effective than the fixed-point method, which follows the exact minimizer of $\Q^{(k)}$ in each step. The complete algorithmic procedure is summarized in Algorithm~\ref{alg:FPM}.

While the fixed-point method is effective, it lacks a guarantee of global convergence. To remedy this drawback, we apply the fixed-point method for $10$ iterations, followed by a preconditioned nonlinear conjugate gradient (CG) method to continue the minimization process.

\begin{figure}[]
\centering
\includegraphics[height=5cm]{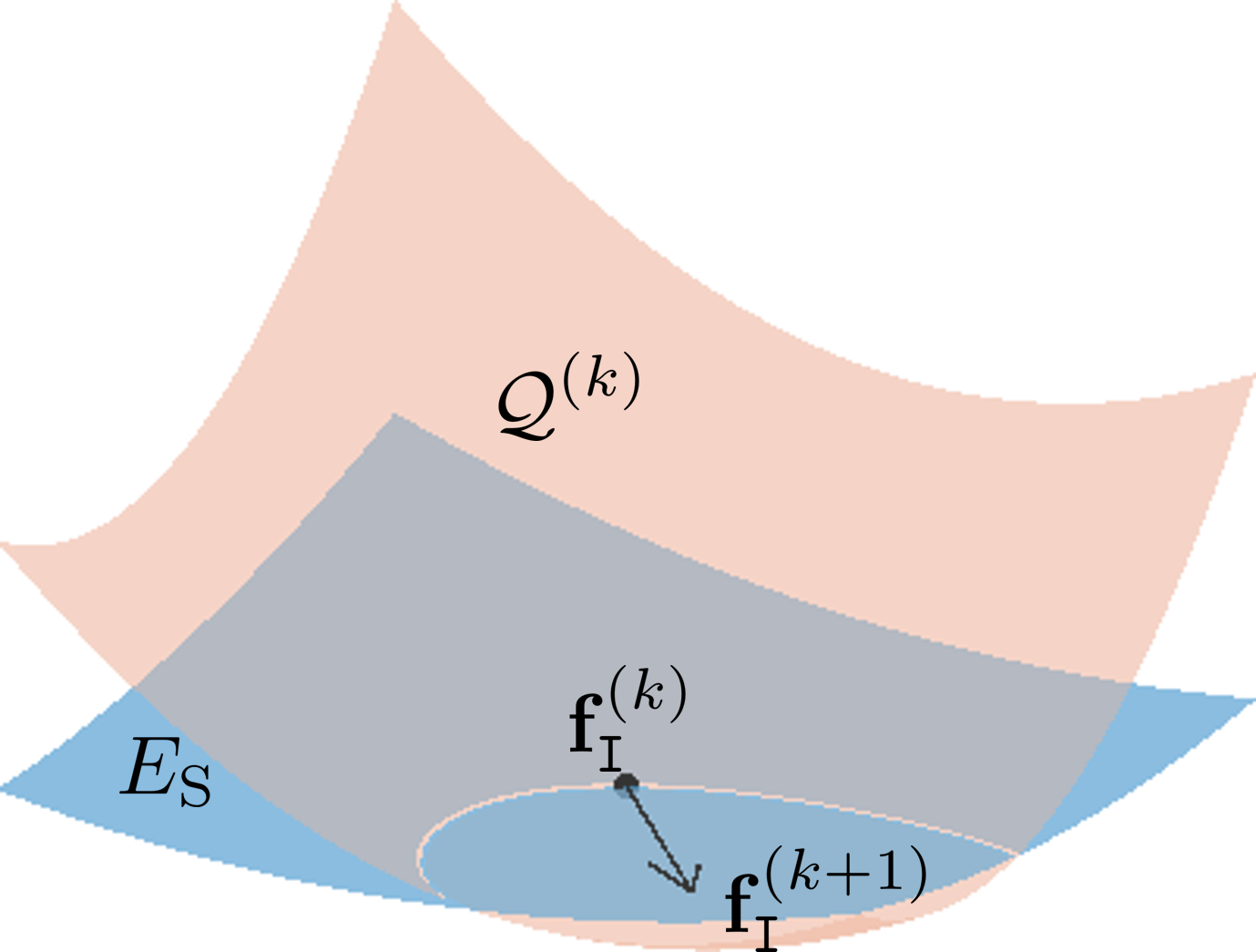}
\caption{Sequential quadratic approximations in the fixed-point method \eqref{eq:FPM_Es}. } 
\label{fig:Es_and_Q}
\end{figure}

\begin{algorithm}[h]
\caption{Fixed-point method for square initial mapping}
\label{alg:FPM}
\begin{algorithmic}[1]
\Require A simply connected open triangular mesh $\M$.
\Ensure A square area-preserving initial map $f$.
\State Let $\B = \{b \mid v_b \in \partial \M\}$ and $\I = \{1, \ldots, n\} \setminus \B$.
\State Let $\tE$, $\tF$, $\tG$, and $\tH$ be boundary segments as in \eqref{eq:bdry_edge}.
\State Let $\f_{\tE}^1 = (0,t,2t,\ldots,1)$ and $\f_{\tF}^2 = (0,s,2s,\ldots,1)$, for some $t, s \in \mathbb{R}$.
\State Compute the full boundary map $\f_\B$ using \eqref{eq:bdry_cond}, depending on the genus.
\State Initialize $L = L_\rS(\mathrm{id.})$ as in \eqref{eq:Ls}, with identity map.
\For{$k = 0$ to $10$}
    \State Solve the linear system $L_{\I,\I} \f^s_{\I} = -L_{\I,\B} \f^s_{\B}$, for $s=1,2$.
    \State Update $L$ using \eqref{eq:Ls}.
\EndFor
\end{algorithmic}
\end{algorithm}

\subsection{Preconditioned nonlinear CG method}
A common approach for unconstrained optimization is the \textit{line search method} \cite[Chapter 3]{NoWr06}, which iteratively selects a descent direction and determines an appropriate step size along this direction to achieve maximal reduction in the objective function. Specifically, we define the variable vector as
\[
\f = 
\begin{bmatrix}
    \mathrm{vec}(\f_\I) \\
    \f_{\mathtt{E}}^1\\
    \f_{\mathtt{F}}^2
\end{bmatrix} \in \mathbb{R}^{2n}
\]
and let $\g$ denote the gradient of $E_\rS(f)$ with respect to $\f$, given by \eqref{eq:grad_Es}. At the $k$th iteration, the update is along a direction vector $\p^{(k)}$ with a step size $\alpha_k > 0$, according to
\begin{equation}
\f^{(k+1)} = \f^{(k)} + \alpha_k \p^{(k)}.
\label{eq:line_search}
\end{equation}

A basic line search method is the steepest descent, which chooses the negative gradient as the search direction, $\p^{(k)} = -\g^{(k)}$. While this method guarantees global convergence under an appropriately chosen step size, it often exhibits slow convergence in practice.

An alternative is the CG method, originally developed for solving positive-definite linear systems and particularly effective for large, sparse systems \cite{HeSt52}. The method was later generalized to nonlinear optimization by Fletcher and Reeves \cite{FlRe64}. It improves upon steepest descent by introducing a correction term to the direction, given by
\[
\p^{(k)} = - \g^{(k)} + \frac{{\g^{(k)}}^\top \g^{(k)}} { {\g^{(k-1)}}^\top \g^{(k-1)}} \p^{(k-1)} 
\]
with the initialization $\p^{(0)} = -\g^{(0)}$. When the objective function is convex quadratic, the correction ensures that all the search directions are conjugate.

The fixed-point method in \eqref{eq:FPM_Es} can be reformulated in a line-search form as
\begin{align*}
{\f^s_\I}^{(k+1)} &= -  L_\rS(f^{(k)})_{\I, \I}^{-1} \,L_\rS(f^{(k)})_{\I, \B}\, {\f^s_\B} \\
    &= {\f^s_\I}^{(k)}-  L_\rS(f^{(k)})_{\I,\I}^{-1} ~ \big( L_\rS(f^{(k)})_{\I,\I} \, {\f^s_\I}^{(k)} + L_\rS(f^{(k)})_{\I,\B} \, \f^s_\B \big)\\
    &= {\f^s_\I}^{(k)} -  \tfrac{1}{2}\, L_\rS(f^{(k)})_{\I,\I}^{-1} \, {\g^s_\I}^{(k)},
\end{align*}
where ${\g^s_\I}^{(k)}$ is the gradient of $E_\rS$ with respect to ${\f^s_\I}^{(k)}$. 
This update is equivalent to the line-search method \eqref{eq:line_search} with a fixed step size of $\frac{1}{2}$, and with search direction $-L_\rS(f^{(k)})_{\I,\I}^{-1} \, {\g_\I^s}^{(k)}$, which corresponds to the negative gradient preconditioned by $L_\rS(f^{(k)})_{\I,\I}^{-1}$.
This observation motivates the use of a preconditioned nonlinear conjugate gradient method, aiming to take the advantages of both the fixed-point method and the CG method, where the direction is given by
\[
\p^{(k)} = - M^{-1}\g^{(k)} +  \frac{{\g^{(k)}}^\top M^{-1} \g^{(k)}} { {\g^{(k-1)}}^\top M^{-1} \g^{(k-1)}} \p^{(k-1)},
\]
with the initialization $\p^{(0)} = - M^{-1}\g^{(0)}$, and $M$ is the block diagonal preconditioner matrix given by
\begin{equation} \label{eq:precond}
M = 
\begin{bmatrix}
      I_2 \otimes L_\rS(f^{(0)})_{\I,\I} &  & \\
 &    L_\rS(f^{(0)})_{\tE,\tE} & \\
 & &  L_\rS(f^{(0)})_{\tF,\tF}    
\end{bmatrix}.
\end{equation}

It is worth noting that the preconditioner matrix $M$ remains constant, allowing it to be \textit{reordered Cholesky decomposed} \cite{ChDH08} in advance for improved computational efficiency. Specifically,  for each block $M_i$ of $M$, $i=1,\ldots,4$, can be decomposed by
\begin{equation} \label{eq:Chol}
U_i^\top U_i = P_i^\top M_i P_i,
\end{equation}
where $U_i$ is an upper triangular matrix and $P_i$ is a permutation matrix computed using the approximate minimum degree ordering \cite{AmDD04}. The following forward and backward substitutions can efficiently solve linear systems of the form $M_i\x = \r$:
\begin{subequations} \label{eq:LS}
\begin{equation} \label{eq:LS1}
U_i^\top \y = P_i^\top \r \,\text{ and }\, U_i\z = \y,
\end{equation}
with the final solution given by
\begin{equation} \label{eq:LS2}
\x = P_i\z. 
\end{equation}
\end{subequations}

To maximize the effectiveness of the search direction in the line search method \eqref{eq:line_search}, the ideal step length minimizes the objective function along this direction, i.e.,
\[
\alpha = \argmin_{x>0} \varphi(x) \equiv E_\rS( \f^{(k)} + x \p^{(k)}).
\]
However, computing the exact minimizer is computationally expensive due to the complexity of the energy functional $E_\rS$. Instead, we interpolate $\varphi(x)$ with a quadratic function $\varphi_q(x)$ and approximate the ideal step length with the minimizer of $\varphi_q(x)$ \cite[Chapter 3.5]{NoWr06}.

The coefficients of the quadratic function $\varphi_q(x) = ax^2 + bx + c$ are determined using the following three conditions:
\begin{subequations} \label{eq:step}
\begin{equation}
\begin{cases}
\varphi_q (0) = \varphi(0) = E_\rS( \f^{(k)})\\
\varphi_q' (0) = \varphi'(0) = {\g^{(k)}}^\top  \p^{(k)}\\
\varphi_q(\alpha_{k-1}) = \varphi(\alpha_{k-1}) = E_\rS( \f^{(k)} + \alpha_{k-1} \p^{(k)}),
\end{cases}
\end{equation} 
where $\alpha_{k-1}$ is the step length from the previous iteration, used as an initial guess. As a result, the step length $\alpha_k$ for the $k$th iteration is given by
\begin{equation}
\alpha_k = \argmin \varphi_q (x) = \frac{-b}{2a} = \frac{\varphi'(0)\,\alpha_{k-1}^2 }{2\, \big( \varphi(\alpha_{k-1}) - \varphi'(0) \alpha_{k-1} + \varphi(0) \big)}.
\end{equation}
\end{subequations}

It is worth noting that if $\varphi(\alpha)$ does not exhibit sufficient decrease, one can apply the quadratic interpolation again using the resulting step length. This approach tends to have a self-correcting effect. The computational procedure is summarized in Algorithm~\ref{alg:PCG}.

\begin{algorithm}[h]
\caption{Preconditioned nonlinear CG method for authalic map}
\label{alg:PCG}
\begin{algorithmic}[1]
\Require A simply connected open triangular mesh $\M$.
\Ensure A square area-preserving map $f$.
\State Initialize $k = 0$.
\State Compute initial map $\f_k$ via Algorithm~\ref{alg:FPM}.
\State Compute gradient $\g_k$ using \eqref{eq:grad_Es}.
\State Assemble preconditioner $M$ from \eqref{eq:precond} and compute Cholesky factorization by \eqref{eq:Chol}.
\State Solve $M \h_k = \g_k$ by \eqref{eq:LS} and set search direction $\p_k = -\h_k$.
\While{not converged}
    \State Determine step size $\alpha_k$ via \eqref{eq:step}.
    \State Update $\f_{k+1} \gets \f_k + \alpha_k \p_k$.
    \State Enforce boundary condition via \eqref{eq:bdry_cond}, depending on genus.
    \State Update gradient $\g_{k+1}$ and solve $M \h_{k+1} = \g_{k+1}$ by \eqref{eq:LS}.
    \State Update search direction $\p_{k+1} \gets -\h_{k+1} + \dfrac{\h_{k+1}^\top \g_{k+1}}{\h_k^\top \g_k} \p_k$.
    \State Update $k \gets k + 1$.
\EndWhile
\end{algorithmic}
\end{algorithm}

\subsection{Global convergence}
The preconditioned nonlinear CG method is globally convergent under the assumption that each step length $\alpha_k$ satisfies the \textit{strong Wolfe conditions} \cite[Chapter 3.1]{NoWr06}:
\begin{subequations} \label{eq:Wolfe}
\begin{align}
E_\rS(\f^{(k+1)}) - E_\rS(\f^{(k)})& \leq  c_1 \, \alpha_k {\g^{(k)}}^{\top} \p^{(k)}, \label{eq:Wolfe1} \\
|{\g^{(k+1)}}^{\top}  \p^{(k)}| & \leq c_2 \, | {\g^{(k)}}^{\top} \p^{(k)}|, \label{eq:Wolfe2}
\end{align}
\end{subequations}
where $0 < c_1 < c_2 < \frac{1}{2}$.
Under this assumption, the sequence generated by Algorithm~\ref{alg:PCG} satisfies the following convergence results:

\begin{theorem} \label{thm:convergence}
Suppose each step length in the preconditioned nonlinear CG method, Algorithm~\ref{alg:PCG} satisfies the strong Wolfe conditions \eqref{eq:Wolfe} with $0<c_1<c_2<\frac{1}{2}$. Then,
\[
\liminf_{k \rightarrow \infty} \| \g^{(k)} \|_{M^{-1}} = 0,
\]
provided $M$ is a symmetric positive definite matrix.
\end{theorem}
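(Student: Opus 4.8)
The plan is to eliminate the preconditioner by a linear change of variables, which turns Algorithm~\ref{alg:PCG} into an ordinary Fletcher--Reeves conjugate gradient iteration, and then to apply the classical global convergence theorem for that method under the strong Wolfe conditions \cite[Chapter 5.2]{NoWr06}, which itself rests on Zoutendijk's condition \cite[Chapter 3.2]{NoWr06}. Since $M$ is symmetric positive definite, it admits a unique symmetric positive definite square root $M^{1/2}$; I would set $\y = M^{1/2}\f$ and $\widetilde E(\y) \equiv E_\rS\big(M^{-1/2}\y\big)$, so that the sequence $\y^{(k)} = M^{1/2}\f^{(k)}$ is generated by a method minimizing $\widetilde E$. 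A one-line computation gives $\nabla_\y\widetilde E(\y) = M^{-1/2}\g$ with $\g = \nabla_\f E_\rS$, hence $\|\nabla_\y\widetilde E(\y^{(k)})\|_2^2 = {\g^{(k)}}^\top M^{-1}\g^{(k)} = \|\g^{(k)}\|_{M^{-1}}^2$; thus the assertion is equivalent to $\liminf_{k\to\infty}\|\nabla_\y\widetilde E(\y^{(k)})\|_2 = 0$.

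The next step is to check that, with $\widetilde\p^{(k)} \equiv M^{1/2}\p^{(k)}$, the iteration of Algorithm~\ref{alg:PCG} transforms exactly into the Fletcher--Reeves CG iteration for $\widetilde E$: the update \eqref{eq:line_search} becomes $\y^{(k+1)} = \y^{(k)} + \alpha_k\widetilde\p^{(k)}$; the preconditioned direction recurrence becomes $\widetilde\p^{(k)} = -\nabla_\y\widetilde E(\y^{(k)}) + \widetilde\beta_k\,\widetilde\p^{(k-1)}$ with $\widetilde\beta_k = \|\nabla_\y\widetilde E(\y^{(k)})\|_2^2 / \|\nabla_\y\widetilde E(\y^{(k-1)})\|_2^2$, which is the Fletcher--Reeves formula; and every inner product appearing in the analysis is preserved, since ${\g^{(k)}}^\top\p^{(j)} = \nabla_\y\widetilde E(\y^{(k)})^\top\widetilde\p^{(j)}$. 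In particular, the strong Wolfe conditions \eqref{eq:Wolfe} for $E_\rS$ along $\p^{(k)}$ become, term for term, the strong Wolfe conditions for $\widetilde E$ along $\widetilde\p^{(k)}$, with the same constants $0 < c_1 < c_2 < \tfrac12$.

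I would then verify the two structural hypotheses that the classical theorem requires of the objective. Boundedness from below is immediate, since $E_\rS(f) = \sum_{\tau\in\F}|f(\tau)|^2/|\tau| \ge 0$. For Lipschitz continuity of $\nabla_\y\widetilde E$ on an open neighborhood of the sublevel set $\mathcal{L} = \{\y : \widetilde E(\y) \le \widetilde E(\y^{(0)})\}$, I would first observe that $E_\rS$ is a polynomial of degree four in the vertex coordinates (each $|f(\tau)|^2$ is the square of a $2\times 2$ determinant and $|\tau|$ is constant), so $E_\rS \in C^\infty$ and $\nabla E_\rS$ is Lipschitz on every bounded set; it then suffices to show that $\mathcal{L}$ is bounded, i.e.\ that $E_\rS$ is coercive on the affine subspace defined by the boundary constraints \eqref{eq:bdry_cond}. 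This follows from a standard coercivity argument: all boundary coordinates are pinned to the unit square, while for any interior vertex $v_i$ and an incident face $\tau = [v_i,v_j,v_k]$ one has $|f(\tau)| = \tfrac12\big|(\f_j - \f_i)\times(\f_k - \f_i)\big|$, which tends to infinity as $\|\f_i\|\to\infty$ unless $\f_i$ drifts in a direction parallel to $\f_k - \f_j$; since not all edges incident to $v_i$ in the (connected) mesh are parallel, at least one incident image triangle --- and hence $E_\rS$ --- must blow up, so $\widetilde E$ is coercive and $\mathcal{L}$ is compact.

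With these two hypotheses established, the conclusion is a direct appeal to the classical analysis: for Fletcher--Reeves CG on $\widetilde E$ with $c_2 < \tfrac12$, unrolling the direction recurrence together with \eqref{eq:Wolfe2} yields the uniform descent estimate $-\tfrac{1}{1-c_2} \le \nabla_\y\widetilde E(\y^{(k)})^\top\widetilde\p^{(k)} / \|\nabla_\y\widetilde E(\y^{(k)})\|_2^2 \le \tfrac{2c_2-1}{1-c_2} < 0$; Zoutendijk's condition $\sum_k \cos^2\theta_k\,\|\nabla_\y\widetilde E(\y^{(k)})\|_2^2 < \infty$ follows from \eqref{eq:Wolfe1}, boundedness below, and the Lipschitz gradient; and a contradiction argument that bounds $\|\widetilde\p^{(k)}\|_2^2$ shows $\sum_k\|\widetilde\p^{(k)}\|_2^{-2} = \infty$ unless $\liminf_{k\to\infty}\|\nabla_\y\widetilde E(\y^{(k)})\|_2 = 0$. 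Translating back through $\nabla_\y\widetilde E(\y^{(k)}) = M^{-1/2}\g^{(k)}$ gives $\liminf_{k\to\infty}\|\g^{(k)}\|_{M^{-1}} = 0$. The only genuinely nonroutine step I anticipate is the coercivity of $E_\rS$ on the boundary-constrained subspace, which is precisely what makes the polynomial gradient Lipschitz where the convergence analysis needs it; everything else is the bookkeeping of the change of variables and a verbatim invocation of the textbook argument.
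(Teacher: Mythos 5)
Your overall strategy matches the paper's in substance: the paper's proof likewise reduces the claim to (i) $E_\rS$ being bounded below, (ii) Lipschitz continuity of the gradient on the relevant region, and (iii) the global convergence theory for Fletcher--Reeves CG under the strong Wolfe conditions with $c_2<\tfrac12$, except that it delegates the preconditioned version of (iii) to \cite[Appendix A]{LiYu24} instead of carrying out the $M^{1/2}$ change of variables. Your explicit reduction is correct and more transparent: with $\y=M^{1/2}\f$ the $\beta$ formula becomes the Fletcher--Reeves formula, the inner products ${\g}^\top\p$ and hence the strong Wolfe conditions \eqref{eq:Wolfe} transport verbatim, and $\|\g\|_{M^{-1}}=\|\nabla_\y\widetilde E\|_2$, so the classical Al-Baali/Zoutendijk argument from \cite{NoWr06} applies once its hypotheses are verified.

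The genuine gap is in your coercivity argument for the sublevel set. You argue that if an interior image point $\f_i$ escapes to infinity then some incident triangle area blows up because not all edges incident to $v_i$ are parallel; but this implicitly holds the neighbours $\f_j,\f_k$ fixed. Nothing in the iteration prevents a whole cluster of interior vertices (together with the non-pinned boundary coordinates $\f_{\tE}^1,\f_{\tF}^2$, which under \eqref{eq:bdry_cond} are constrained only to lines, not to segments) from escaping together --- for instance by a common translation, or along a direction $\v$ while the cluster collapses onto a line parallel to $\v$ --- in which case the areas of triangles interior to the cluster need not grow, and the growth of the mixed triangles joining the cluster to the pinned corners requires a separate argument. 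What is actually needed is that no nonzero displacement $\bd$ supported on the free coordinates annihilates, for every face, both the quadratic and the linear $t$-coefficients of $\det\bigl[\f_j-\f_i+t(\bd_j-\bd_i),\;\f_k-\f_i+t(\bd_k-\bd_i)\bigr]$, i.e.\ that the recession cone of the degree-four polynomial $E_\rS$ on the constrained affine subspace is trivial; your single-vertex argument does not establish this. For comparison, the paper sidesteps the issue by asserting that the iterates remain in the compact set $[-1,1]^{2n}$, which is itself stated without justification, so neither argument fully closes this point --- but as written your coercivity step does not go through and is the part that must be repaired before the textbook theorem can be invoked.
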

\begin{proof}
The stretch energy $E_\rS$ is bounded below due to $E_\rA \geq 0$. Moreover, since the map $\f$ lies in a compact domain $[-1, 1]^{2n}$, and $E_\rS$ is smooth, both the energy and its gradient are Lipschitz continuous on this domain. Given that the preconditioner $M$ is symmetric positive definite, the result follows directly from \cite[Appendix A]{LiYu24}.
\end{proof}

Theorem~\ref{thm:convergence} implies the existence of a subsequence $\{\g^{(k_\ell)} \}_{\ell=1}^\infty$ that converges to zero. Consequently, for any prescribed tolerance $\varepsilon > 0$, Algorithm~\ref{alg:PCG} is guaranteed to terminate under the stopping criterion $\| \g^{(k)} \|_{M^{-1}} < \varepsilon$ for some $k$.

\subsection{Enforcing the bijectivity}
To guarantee the bijectivity of the resulting parameterizations, we apply an overlap correction step based on convex combination mappings, since such mappings are guaranteed to be bijective when the boundary is mapped to a convex region \cite[Theorem 4.1]{Floa03}.

A \textit{convex combination map} is defined so that each interior vertex is expressed as a convex combination of its neighbors. That is, for every interior vertex $v_i$, there exist weights $\lambda_{i,j} > 0$ for $v_j \in \N(v_i)$, the neighborhood of $v_i$, such that
\[
f(v_i) = \sum_{j \in \N (v_i)} \lambda_{i,j} f(v_j), ~~~  \sum_{j \in \N (v_i)} \lambda_{i,j} = 1.
\]
Equivalently, this condition can be expressed as $[L(f)\,\f^s]_\I = \0$ for some Laplacian matrix $L(f)$ with positive weights, since the construction of the Laplacian implies that
$$
f_i = \sum_{j \in \N_i} \frac{-L(f)_{i,j}}{L(f)_{i,i}} \, f_j, ~~~ \sum_{j \in \N_i} \frac{-L(f)_{i,j}}{L(f)_{i,i}}  = 1.
$$

To realize such a map, we construct the Laplacian using \textit{mean value weights} \cite{Floa03b}, defined by
\begin{equation}
L_\mathrm{M}(f)_{i,j} = 
\begin{cases}
    - \sum_{[v_i, v_j, v_k] \in \F} \frac{\tan (\gamma_{j,k}^i(f) /2)}{\|f_i - f_j\|_2}  & \text{if $[v_i, v_j] \in \E$,} \\
    -\sum_{\ell \neq i} L_\mathrm{M}(f)_{i, \ell}  & \text{if $j = i$,} \\
    0   &\text{otherwise,}
\end{cases}
\label{eq:L_M}
\end{equation}
where $\gamma_{j,k}^i(f)$ is the angle opposite the edge $[f_j, f_k]$ at the vertex $f_i$ on the mapped surface $f(\M)$ (see Figure~\ref{fig:mean value}).
We then update the interior maps by solving the linear systems
\begin{equation}
L_\mathrm{M}(f)_{\I, \I} \f^s_{\I} = - L_\mathrm{M}(f)_{\I, \B} \f^s_{\B},~~~ s = 1,2.
\label{eq:MVT_correct}
\end{equation}
Because $0 < \gamma_{j,k}^i(f) < \pi$, the mean value weights satisfy $\tan(\gamma_{j,k}^i(f)/2) > 0$, ensuring that the resulting mapping is a convex combination map, and thus bijective whenever the boundary is convex \cite[Theorem 4.1]{Floa03}.

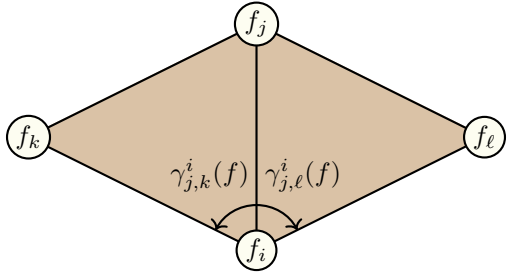
\begin{figure}[]
\centering
\begin{tikzpicture}[black,thick,scale=1.5]
\coordinate (v_i) at (0,0);
\coordinate (v_j) at (0,2);
\coordinate (v_l) at (2,1);
\coordinate (v_k) at (-2,1);
\filldraw[camel!60] (v_i) -- (v_j) -- (v_k);
\filldraw[camel!60] (v_i) -- (v_j) -- (v_l);
\pic[draw, ->, "$\gamma_{j, k}^i (f)$", angle eccentricity=1.95, angle radius=0.6cm]{angle = v_j--v_i--v_k};
\pic[draw, <-, "$\gamma_{j, \ell}^i (f)$", angle eccentricity=1.95, angle radius=0.6cm]{angle = v_l--v_i--v_j};
\draw{
(v_i) -- (v_j) -- (v_k) -- (v_i) -- (v_l) -- (v_j)
};
\tikzstyle{every node}=[circle, draw, fill=lightgoldenrodyellow!30,
                        inner sep=1pt, minimum width=2pt]
\draw{
(0,0) node{$f_i$}
(0,2) node{$f_j$}
(2,1) node{$f_\ell$}
(-2,1) node{$f_k$}
};
\end{tikzpicture}
\caption{An illustration of the mean-value weight \cite{Floa03b} defined on $f(\M)$.}
\label{fig:mean value}
\end{figure}

\begin{figure}[h]
\centering
\resizebox{\textwidth}{!}{
\begin{tabular}{ccccccc}
\specialrule{.2em}{.1em}{.1em}
\multicolumn{3}{c}{David Head} && \multicolumn{3}{c}{Bull} \\
\multicolumn{3}{c}{$\F(\M) = 21,338$ $\V(\M) = 10,671$} && \multicolumn{3}{c}{$\F(\M) = 34,504$ $\V(\M) = 17,254$} \\
\includegraphics[height=3cm]{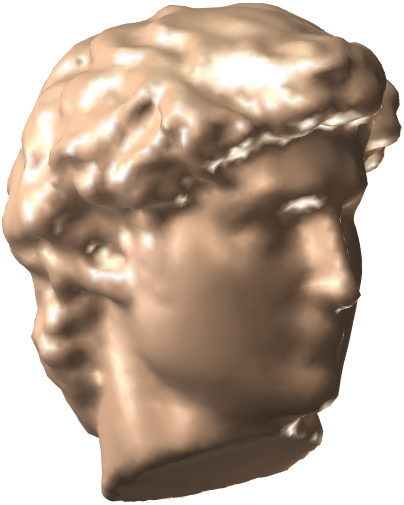} &
\includegraphics[height=3cm]{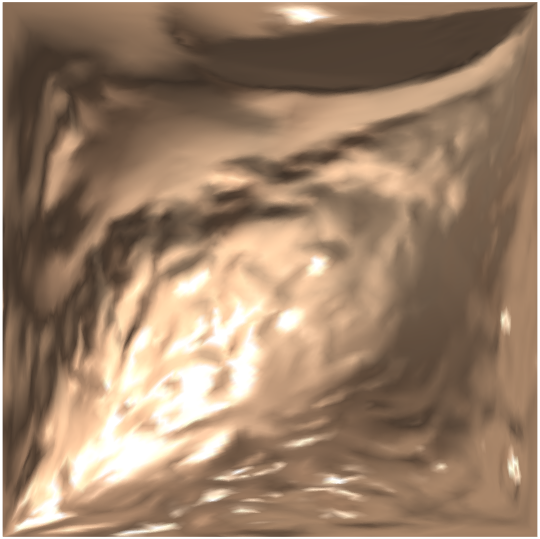} &
\includegraphics[height=3cm]{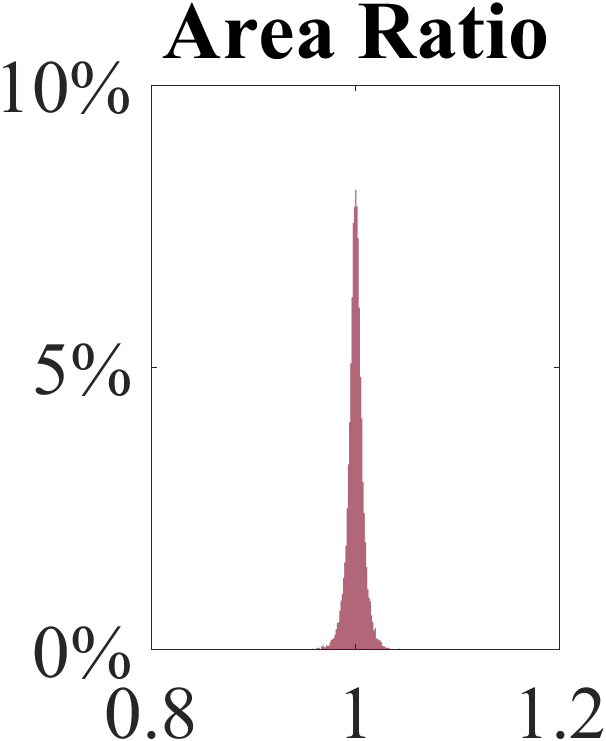} &&
\includegraphics[height=3cm]{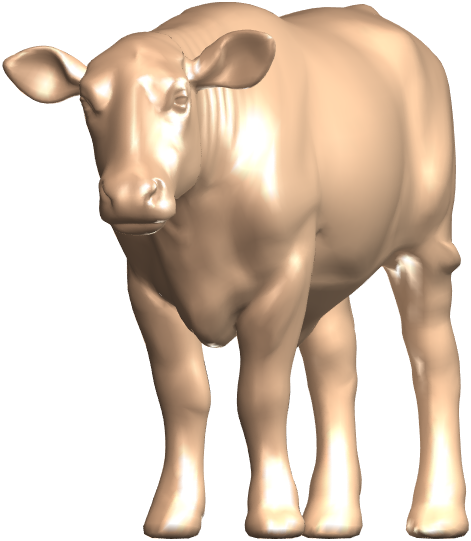} &
\includegraphics[height=3cm]{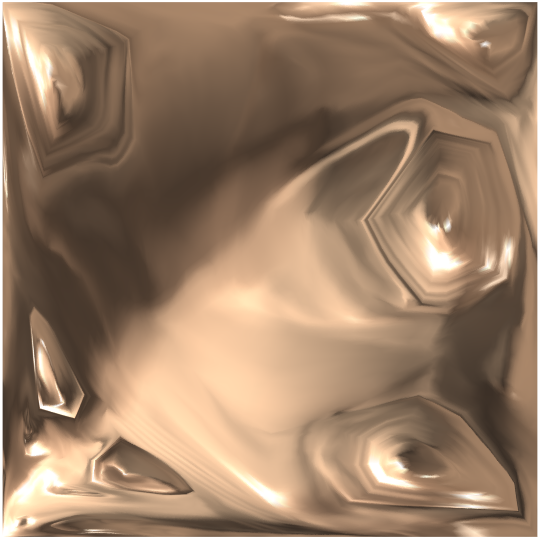} &
\includegraphics[height=3cm]{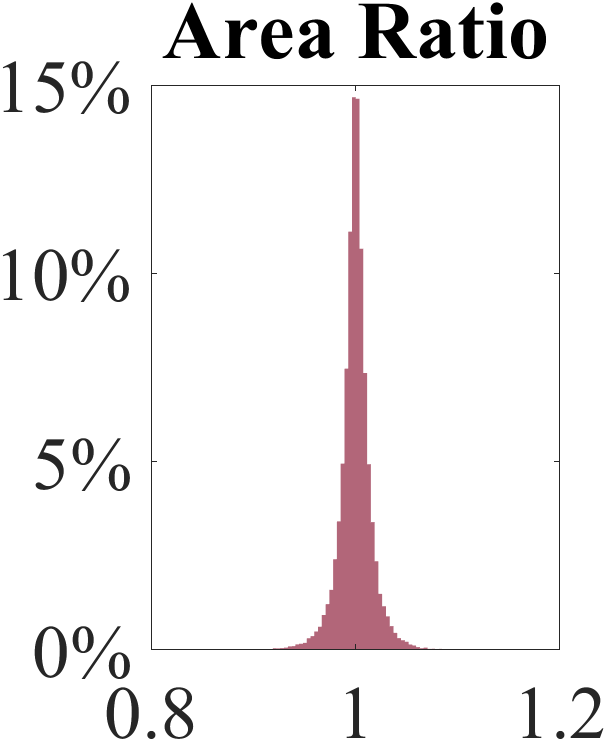} \\
\Cline{1pt}{1-3}\Cline{1pt}{5-7}
\multicolumn{3}{c}{Bunny} && \multicolumn{3}{c}{Human Brain} \\
\multicolumn{3}{c}{$\F(\M) = 111,364$ $\V(\M) = 55,916$} && \multicolumn{3}{c}{$\F(\M) = 120,000$ $\V(\M) = 60,155$} \\
\includegraphics[height=3cm]{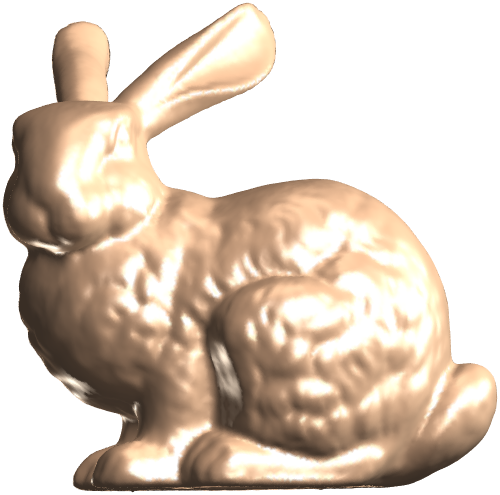} &
\includegraphics[height=3cm]{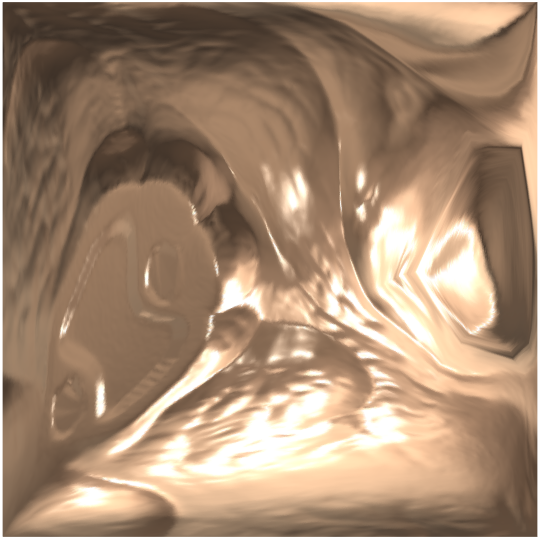} &
\includegraphics[height=3cm]{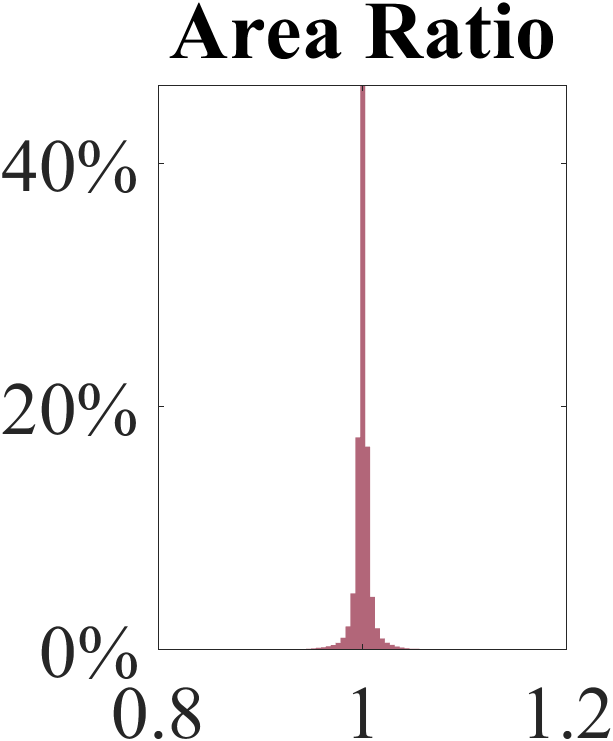} &&
\includegraphics[height=3cm]{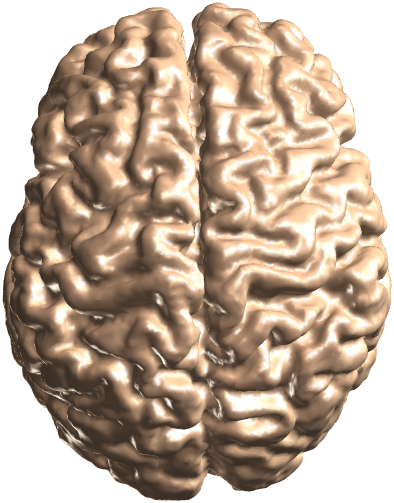} &
\includegraphics[height=3cm]{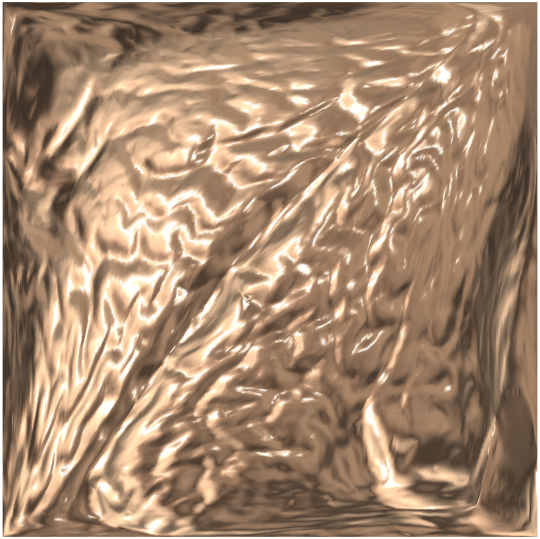} &
\includegraphics[height=3cm]{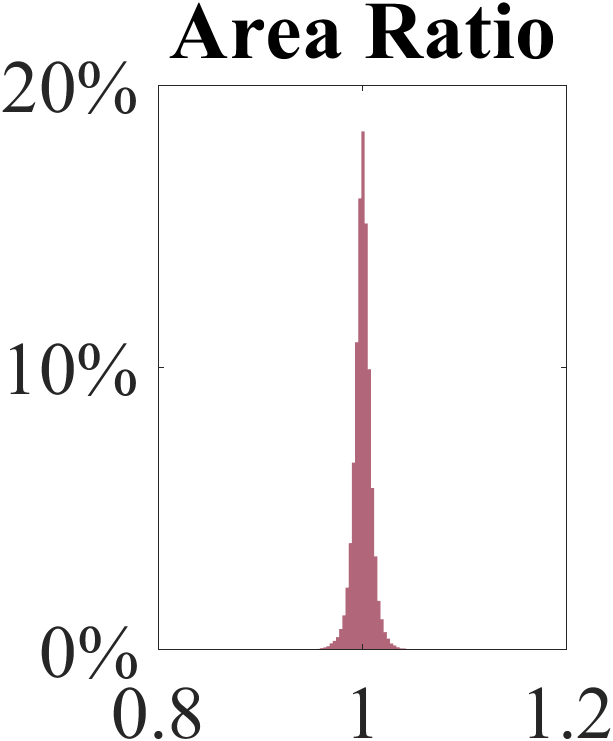} \\
\specialrule{.2em}{.1em}{.1em}
\end{tabular}
}
\caption{Genus-zero triangular meshes and their area-preserving parameterizations computed by our method, together with histograms of the area ratio \eqref{eq:Area_dist}.}
\label{fig:MeshModel_g0}
\end{figure}

\begin{figure}[h]
\centering
\resizebox{\textwidth}{!}{
\begin{tabular}{ccccccc}
\specialrule{.2em}{.1em}{.1em}
\multicolumn{3}{c}{Vertebrae} && \multicolumn{3}{c}{Kitten} \\
\multicolumn{3}{c}{$\F(\M) = 16,420$ $\V(\M) = 8,297$} && \multicolumn{3}{c}{$\F(\M) = 20,000$ $\V(\M) = 10,097$} \\
\includegraphics[height=3cm]{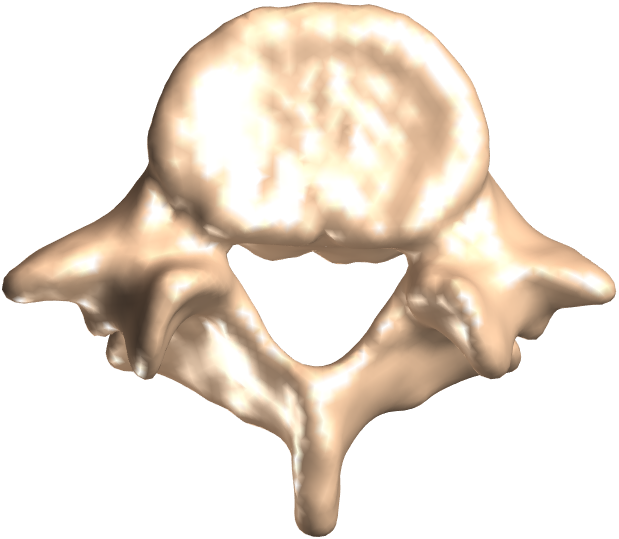} &
\includegraphics[height=3cm]{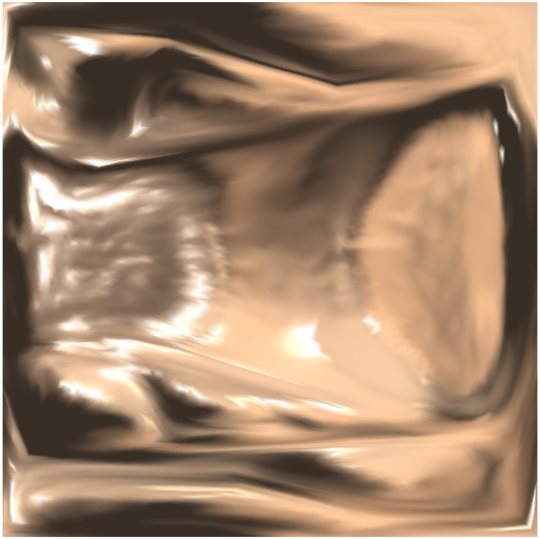} &
\includegraphics[height=3cm]{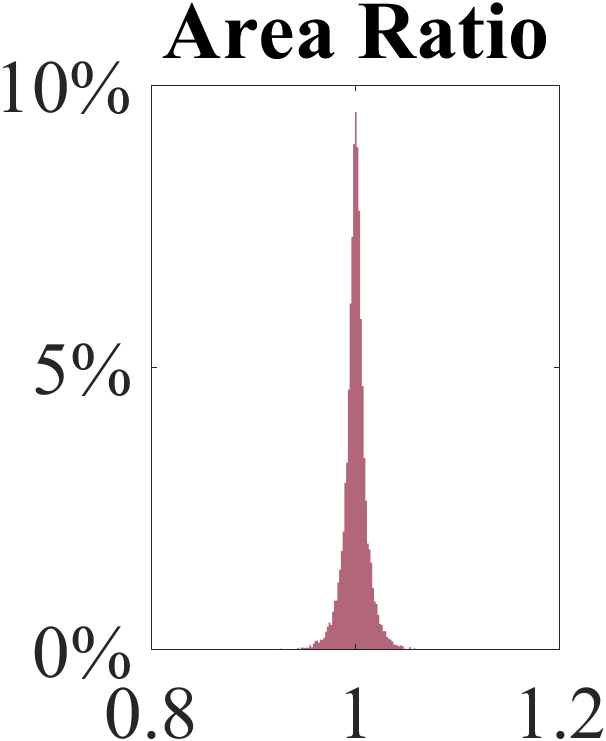} &&
\includegraphics[height=3cm]{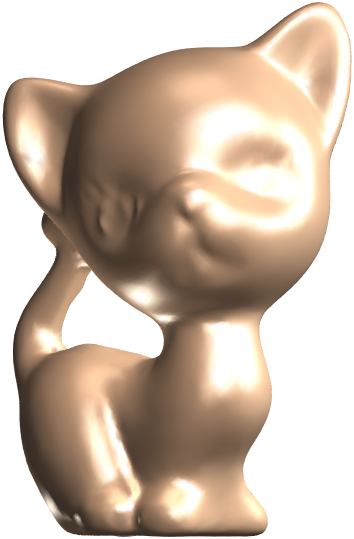} &
\includegraphics[height=3cm]{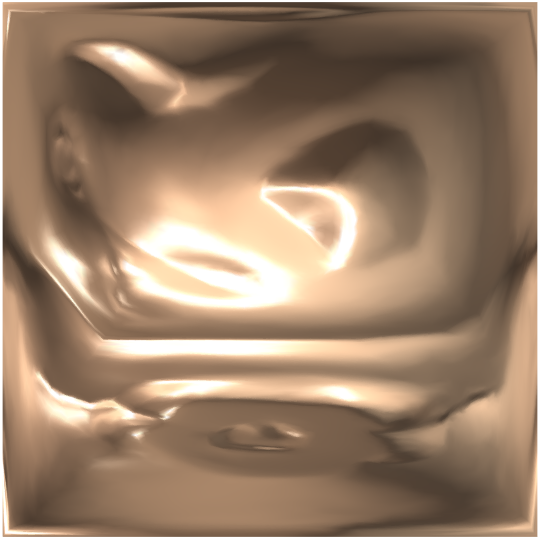} &
\includegraphics[height=3cm]{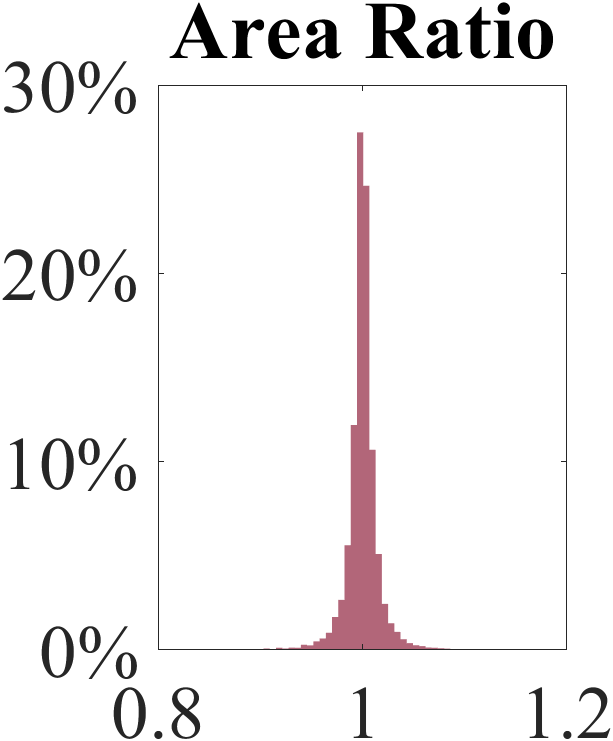} \\
\Cline{1pt}{1-3}\Cline{1pt}{5-7}
\multicolumn{3}{c}{Rocker Arm} && \multicolumn{3}{c}{Chess Horse} \\
\multicolumn{3}{c}{$\F(\M) = 20,088$ $\V(\M) = 10,165$} && \multicolumn{3}{c}{$\F(\M) = 46,016$ $\V(\M) = 23,137$} \\
\includegraphics[height=3cm]{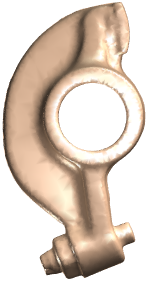} &
\includegraphics[height=3cm]{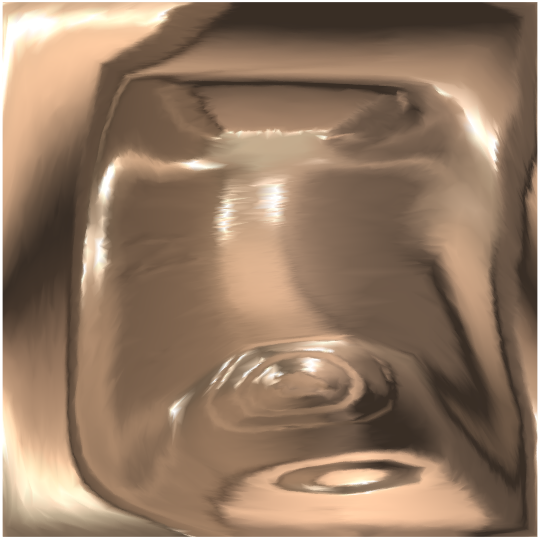} &
\includegraphics[height=3cm]{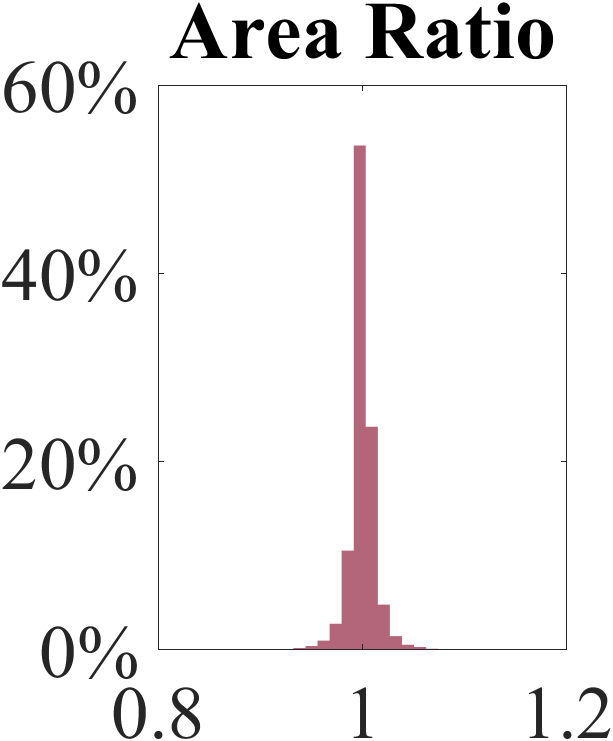} &&
\includegraphics[height=3cm]{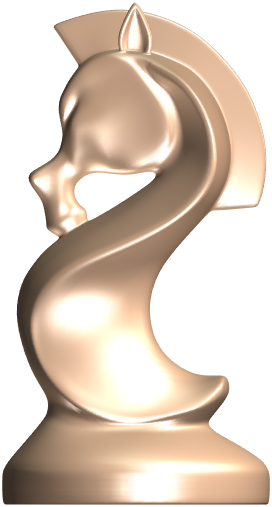} &
\includegraphics[height=3cm]{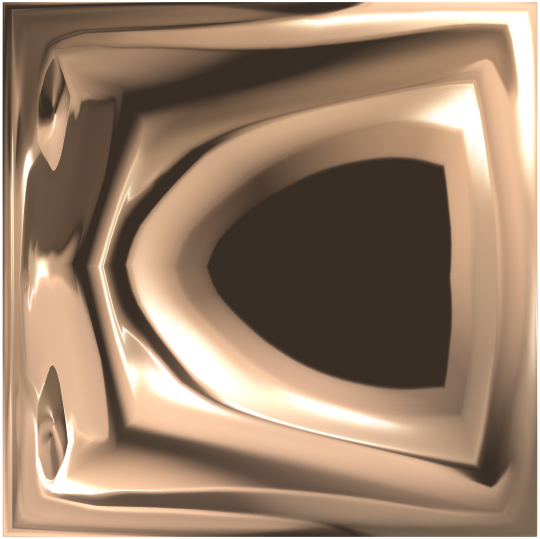} &
\includegraphics[height=3cm]{Rocker_arm_Hist.png} \\
\specialrule{.2em}{.1em}{.1em}
\end{tabular}
}
\caption{Genus-one triangular meshes and their area-preserving parameterizations computed by our method, together with histograms of the area ratio \eqref{eq:Area_dist}.}
\label{fig:MeshModel_g1}
\end{figure}

\section{Numerical experiments} \label{sec:6}
In this section, we present numerical experiments to demonstrate the effectiveness of the proposed method. The datasets used in our experiments are adapted from publicly available sources, including the Stanford 3D Scanning Repository \cite{Stanford} and Sketchfab \cite{Sketchfab}. All computations were performed in MATLAB on a laptop equipped with an AMD Ryzen 9 5900HS processor and 32 GB of RAM.

\subsection{Square area-preserving parameterizations}
To quantify the area distortion, we define the area ratio for a triangle $\tau$ as
\begin{equation}
    \cR_\mathrm{area}(\tau) =  \bigg| \frac{|f(\tau)| / \A(f) }{|\tau| / |\M|} \bigg|,
\label{eq:Area_dist}
\end{equation}
where $|f(\tau)|$ and $|\tau|$ denote the areas of $\tau$ in the parameter domain and in the original mesh $\M$, respectively, and $\A(f)$ is the total area of the parameter domain. For a perfectly area-preserving mapping, the mean and standard deviation of $\cR_\mathrm{area}$ over all mesh faces satisfy
\[
\mean_{\tau \in \F(\M)} \cR_\mathrm{area}(\tau) = 1, ~~\mbox{and}~~   \SD_{\tau \in \F(\M)} \cR_\mathrm{area}(\tau) = 0.
\]

Figures~\ref{fig:MeshModel_g0} and \ref{fig:MeshModel_g1} show the benchmark triangular mesh models with genus-zero and genus-one, their corresponding parameterizations, and the histograms of area ratios. In each case, the parameterization is obtained by first applying Algorithm~\ref{alg:slice_mesh} to slice the closed mesh, followed by Algorithm~\ref{alg:PCG} to map it onto a unit square domain under the boundary conditions \eqref{eq:bdry_cond} and ensuring bijectivity through the linear systems in \eqref{eq:MVT_correct}.

The numerical results for all benchmark models are summarized in Table~\ref{tab:square_map}. The resulting mappings exhibit low $E_\rA$ values, indicating good area preservation, since $E_\rA$ reflects the variance of the area-weighted area ratios, as stated in Theorem~\ref{thm:Ea_var}. This is further supported by the small standard deviation of the area ratio $\cR_\mathrm{area}$, which confirms Theorem~\ref{thm:Ea_unwvar} that the variance of $\cR_\mathrm{area}$ is bounded by $E_\rA$. All examples were completed in under 15 seconds. Furthermore, no folded triangular faces were observed following the correction in \eqref{eq:MVT_correct}, confirming the bijectivity of the resulting mappings.

In summary, the proposed method efficiently produces highly area-preserving mappings while ensuring bijectivity.

\begin{table}[]
\centering
\caption{ 
Numerical results of square parameterization obtained by Algorithm~\ref{alg:PCG} for all benchmark models, sliced using Algorithm~\ref{alg:slice_mesh}. 
$E_\rA$: authalic energy functional \eqref{eq:Ea}. 
$\cR_\mathrm{area}$: area ratios \ref{eq:Area_dist}. 
$\#$Foldings: number of folded triangles.
$\#$Iterations: total number of iterations.
}
\label{tab:square_map}
\begin{tabular}{lrccccc}
\specialrule{.2em}{.1em}{.1em}
\multirow{2}{*}{Model name}   & Time     & \multirow{2}{*}{$E_\rA$}  & \multicolumn{2}{c}{$\cR_\mathrm{area}$} &  $\#$Fold- &  $\#$Iter-\\ 
& {(secs.)}  &        &  mean  & SD   & ings$^*$  & ations$^\dagger$   \\
\hline 
David Head   & 1.07  &$6.15 \times 10^{-5}$ &$1.00 $ & $8.07 \times 10^{-3}$  & 0 & 52  \\
Bull         & 4.32  &$1.57 \times 10^{-4}$ &$1.00 $ & $1.63 \times 10^{-2}$  & 0 & 172  \\
Bunny        & 12.47 &$1.17 \times 10^{-4}$ &$1.00 $ & $1.08 \times 10^{-2}$  & 0 & 116   \\
Human Brain  & 7.94  &$7.18 \times 10^{-5}$ &$1.00$ & $8.90 \times 10^{-3}$  & 0 & 63 \\
Vertebrae    & 2.32  &$1.39 \times 10^{-4}$ &$1.00$ & $1.21 \times 10^{-2}$  & 0 & 120  \\
Kitten       & 3.08  &$1.74 \times 10^{-4}$ &$1.00$ & $1.75 \times 10^{-2}$  & 0 & 148  \\
Rocker Arm   & 2.14  &$8.25 \times 10^{-5}$ &$1.00$ & $1.29 \times 10^{-2}$  & 0 & 83  \\
Chess Horse  & 10.08 &$7.80 \times 10^{-4}$ &$1.00$ & $3.89 \times 10^{-2}$  & 1 $\rightarrow$ 0 & 200  \\
\specialrule{.2em}{.1em}{.1em}
\multicolumn{7}{l}{\small  $^*$: Before (left) and after (right) correction \eqref{eq:MVT_correct}.  } \\
\multicolumn{7}{l}{\small  $^\dagger$: Stopping criteria: energy deficit less than $10^{-6}$ or reach 200 iterations. } \\
\end{tabular}
\end{table}

\subsection{Relationship between authalic energy and area ratio variance}
\label{sec:6.2}
As established in Theorem~\ref{thm:Ea_var}, the authalic energy $E_\rA$ in \eqref{eq:Ea} represents the area-weighted variance of the area ratios. In practice, most literature measures local area distortion using the standard deviation of the area ratio $\cR_\mathrm{area}$ in \eqref{eq:Area_dist}. In this section, we numerically examine the relation between $E_\rA$ and the variance of $\cR_\mathrm{area}$.

Figure~\ref{fig:Ea_vs_AreaRatio} shows the variance of $\cR_\mathrm{area}$ and $E_\rA$ over 50 iterations of Algorithm~\ref{alg:PCG} for all benchmark models. The iterations include Algorithm~\ref{alg:FPM} for the initial mappings, which causes the small twists around the 10th iteration in each curve. 
For almost all tested triangular meshes, our method exhibits consistent convergence patterns in both $E_\rA$ and the variance of $\cR_\mathrm{area}$. Moreover, $E_\rA$ shows an almost linear correlation with the variance of $\cR_\mathrm{area}$ throughout the iterations. The Chess Horse model is a notable exception, likely due to its more complicated geometry, yet it still demonstrates a strong positive relationship between $E_\rA$ and the variance of $\cR_\mathrm{area}$.

In summary, these results confirm that minimizing $E_\rA$ effectively reduces local area distortion.

\begin{figure}[]
\centering
\resizebox{\textwidth}{!}{
\begin{tabular}{ccc}
\includegraphics[height=4cm]{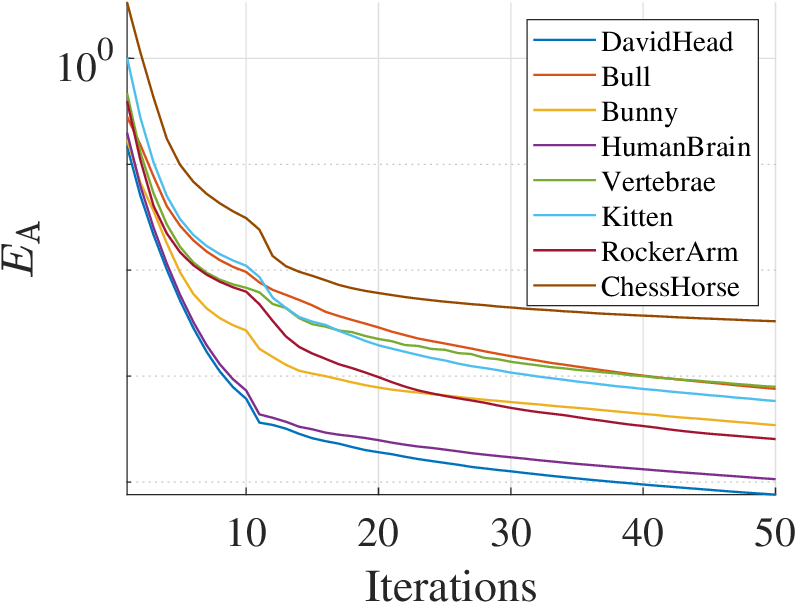} &
\includegraphics[height=4cm]{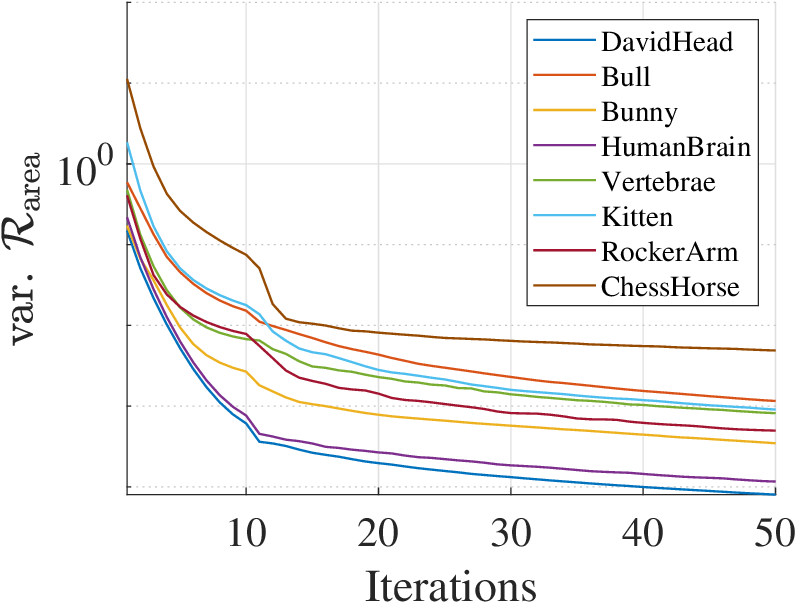} 
\includegraphics[height=4cm]{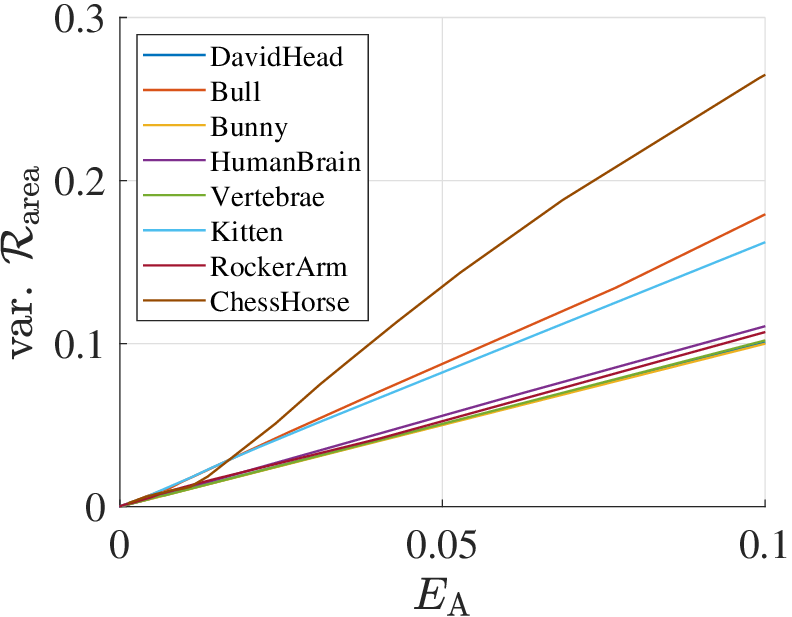} 
\end{tabular}
}
\caption{Relationship between $E_\rA$ in \eqref{eq:Ea} and the variance of $\cR_\mathrm{area}$ in \eqref{eq:Area_dist} during iterations of Algorithm~\ref{alg:PCG}.}
\label{fig:Ea_vs_AreaRatio}
\end{figure}

\begin{figure}[]
\center
\resizebox{\textwidth}{!}{
\begin{tabular}{ccccccc}
\includegraphics[height=3cm]{DavidHead_mesh.png} &
\raisebox{1.2cm}{$\longrightarrow$} & 
\includegraphics[height=2.7cm]{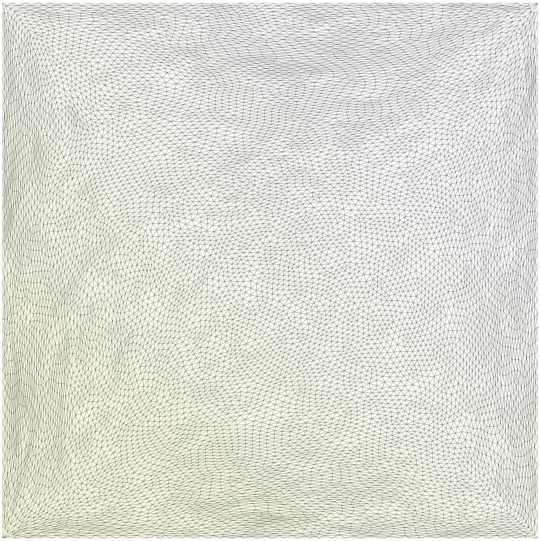} & 
\raisebox{1.2cm}{$\longrightarrow$} &
\begin{overpic}[width=3cm]{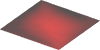}
    \put(0,25){\includegraphics[width=3cm]{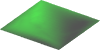}}
    \put(0,50){\includegraphics[width=3cm]{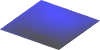}}
\end{overpic} & 
\raisebox{1.2cm}{$\longrightarrow$} & 
\includegraphics[height=2.7cm]{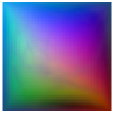}\\
Triangular Mesh & & Parameterization & & Interpolation  & & Geometry image
\end{tabular}
}
\caption{The process of generating a geometry image from a triangular mesh.}
\label{fig:GeoImg}
\end{figure}

\begin{figure}[]
\center
\resizebox{\textwidth}{!}{
\begin{tabular}{ccccccc}
\includegraphics[height=2.7cm]{GeoImg.png} &
\raisebox{1.2cm}{$\longrightarrow$} & 
\includegraphics[height=2.7cm]{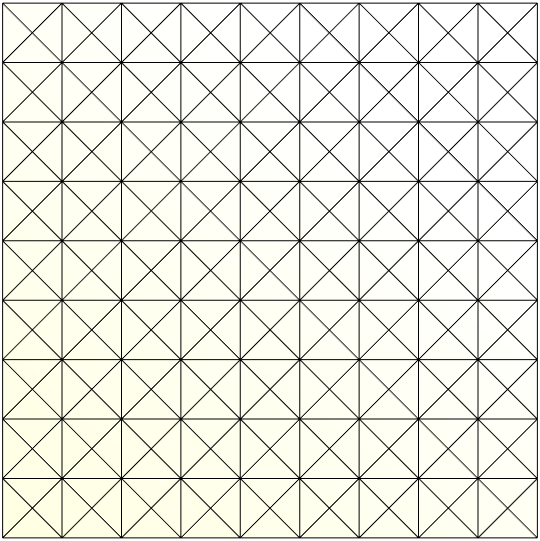} & 
\raisebox{1.2cm}{$\longrightarrow$} &
\begin{overpic}[width=3cm]{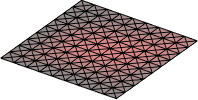}
    \put(0,25){\includegraphics[width=3cm]{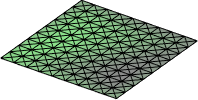}}
    \put(0,50){\includegraphics[width=3cm]{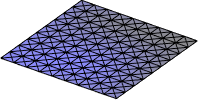}}
\end{overpic} & 
\raisebox{1.2cm}{$\longrightarrow$} & 
\includegraphics[height=3cm]{DavidHead_Remesh.png}\\
Geometry image & & Planner mesh & & Interpolation & & Triangular mesh
\end{tabular}
}
\caption{The process of generating a triangular mesh from a geometry image. }
\label{fig:GeoImgRe}
\end{figure}

\section{Application in geometry image} \label{sec:7}
A geometry image \cite{GuGH02} is an RGB image representation of a surface, where each pixel encodes either a vertex position or a vertex normal. Figure~\ref{fig:GeoImg} illustrates the generation process. First, we compute a square parameterization of the surface. The $x$, $y$, and $z$ coordinates of the vertices are then interpolated onto a uniform lattice grid and stored in the red, green, and blue channels of the image, respectively. The original surface can thus be recovered from the geometry image by reversing this process, as shown in Figure~\ref{fig:GeoImgRe}. In particular, we reconstruct the mesh by inserting a center point into each quadrilateral element of the geometry image. Vertex positions are obtained by interpolating the values stored in the image, while the triangular connectivity is defined by the triangulation on the image.

The parameterization used to generate geometry images has a significant effect on the reconstructed surface. In fact, when the square parameterization aligns with the mesh used in the geometry image, the reconstructed triangular surface has identical vertex positions and triangular connectivity as the original surface. Thus, the ideal case is when the triangular mesh of the square parameterization closely matches the mesh obtained by subdividing quads via their face centers.

To approach this ideal setting, we first compute parameterizations with constant face areas by introducing an area measure into our energy minimization method, as described in Subsection~\ref{sec:7.1}. Then, we correct triangles with severe angular distortion using a truncation strategy for the Beltrami coefficient, based on quasi-conformal theory \cite{LyLC24}, as detailed in Subsection~\ref{sec:7.2}. Finally, we enhance the mesh quality of the reconstructed surface through remeshing and smoothing, described in Subsection~\ref{sec:7.3}, and present the corresponding numerical results in Subsection~\ref{sec:7.4}.

\subsection{Constant face area map via measure preservation}
\label{sec:7.1}
To construct a mapping with constant face areas, we introduce an area measure $\rho : \F \to \mathbb{R}_+$ and generalize the stretch energy in \eqref{eq:Es} as
\[
E_{\rS_\rho}(f) = \sum_{\tau \in \F}\frac{|f(\tau)|^2}{\rho(\tau)} = \frac{1}{2} \sum_{s = 1}^{2} {\f^s}^\top L_{\rS_\rho}(f) \, \f^s,
\]
where the weighted Laplacian matrix is defined by
\begin{equation*} 
[L_{\rS_\rho}(f)]_{i, j} = 
\begin{cases}
    \displaystyle -\sum_{\tau = [v_i,v_j,v_k] } \frac{\cot(\theta_{i, j}^k(f))}{2} \frac{|f(\tau)|}{\rho(\tau)}   &\text{if $[v_i, v_j] \in \mathcal{E}$,} \\[0.5em]
    \displaystyle -\sum_{\ell \neq i} [L_{\rS_\rho}(f)]_{i, \ell}     &\text{if $j=i$,}\\
    0       &\text{otherwise}.
\end{cases}
\end{equation*}
By Theorem~\ref{thm:Ea_var}, replacing $|\tau|$ with $\rho(\tau)$ shows that, when $\A(f) = |\M|$, the functional $E_{\rS_\rho}(f) - |\M|$ corresponds to the variance of $\tfrac{|f(\tau)|}{\rho(\tau)}$. Therefore, a mapping with constant face areas can be obtained by applying the same procedure as Algorithm~\ref{alg:PCG} with $\rho(\tau) = 1$.

However, constant face area maps typically exhibit many thin triangles, which may span across multiple pixels of a geometry image. This can impair the reconstruction of fine geometric details, particularly for surfaces with complicated shapes. To address this issue, we correct triangles with severe angular distortion in the resulting map based on quasi-conformal theory, as described in Subsection~\ref{sec:7.2}.

\subsection{Correction by quasi-conformal map}
\label{sec:7.2}
A \textit{quasi-conformal mapping} $f:\mathbb{C}\to\mathbb{C}$ is a complex-valued function that satisfies the Beltrami equation
\[
\frac{\partial f}{\partial \bar{z}} = \mu(z) \frac{\partial f}{\partial z} ,
\]
where $\mu$ is a complex-valued function with $\|\mu \|_\infty < 1$. The function $\mu$, called the \textit{Beltrami coefficient} of $f$, measures the local angular distortion. In particular, if $\mu(z)=0$, then $f$ satisfies the Cauchy--Riemann equations at $z$, and hence is conformal in a neighborhood of $z$.

Let $f_h$ denote the harmonic map (Algorithm~\ref{alg:FPM} with $k=0$) and $f_c$ the constant face area map. By the measurable Riemann mapping theorem \cite{GaLa00}, a quasi-conformal map $\phi: f_h \to f_c$ is uniquely determined by its Beltrami coefficient, denoted $\mu_\phi$. Based on this, we characterize $f_c$ by $\mu_\phi$ and then reduce excessive distortion by truncating $\mu_\phi$ according to a prescribed threshold $\delta$. The truncated Beltrami coefficient is defined as
\[
\widetilde{\mu}_\phi(z) =
\begin{cases}
    \mu_\phi(z), & \text{if } |\mu_\phi(z)| < \delta, \\
    \frac{\delta }{|\mu_\phi(z)|} \mu_\phi(z), & \text{otherwise},
\end{cases}
\]
and the corrected mapping is then reconstructed using the linear Beltrami solver \cite{LuLW13} with respect to $\widetilde{\mu}_\phi$. In practice, we adopt $\delta = 0.8$.

\subsection{Remeshing and smoothing}
\label{sec:7.3}
To further improve the mesh quality of surfaces reconstructed from geometry images, we apply a remeshing and smoothing procedure based on harmonic parameterizations. The complete remeshing procedure is illustrated in Figure~\ref{fig:GeoImg_remesh}.

By the uniformization theorem, the conformal equivalence classes of genus-zero and genus-one surfaces are the sphere and the plane, respectively. For genus-zero surfaces, we first glue the cut surface back into a closed mesh and then compute a spherical harmonic map \cite{ChLL15}. Remeshing is performed by applying Delaunay triangulation on the stereographic plane twice: once for all vertices, and once for boundary vertices only, to reconstruct a closed mesh. For genus-one surfaces, we compute a rectangular harmonic parameterization with the aspect ratio chosen to minimize the Dirichlet energy. The triangular connectivity is then obtained by Delaunay triangulation, followed by gluing to form the final mesh.

Finally, we smooth the mesh using mean curvature flow along its surface normals \cite{DeMe99, Bely06}, which is solved using the implicit Euler method. The MATLAB implementation is available in the function $\mathtt{smoothpatch}$ \cite{smoothpatch}.

\begin{figure}[]
\center
\resizebox{\textwidth}{!}{
\begin{tabu}{ccccccc}
\includegraphics[height=2.5cm]{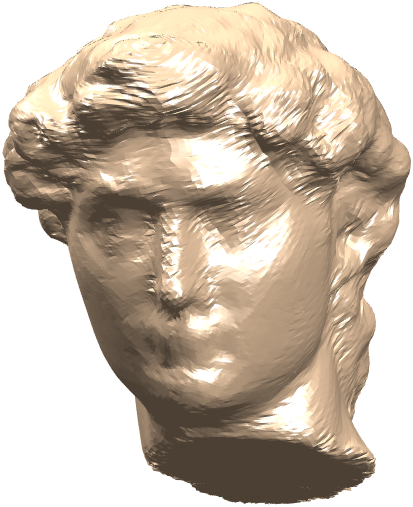} & 
\raisebox{0.8cm}{$\longrightarrow$} & 
\includegraphics[height=2.5cm]{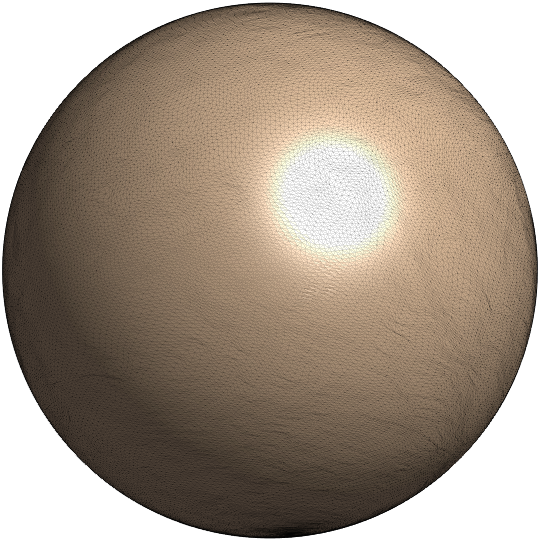} \\
Glue mesh & & Harmonic spherical map \\
$\uparrow$ & & $\downarrow$\\
\includegraphics[height=2.5cm]{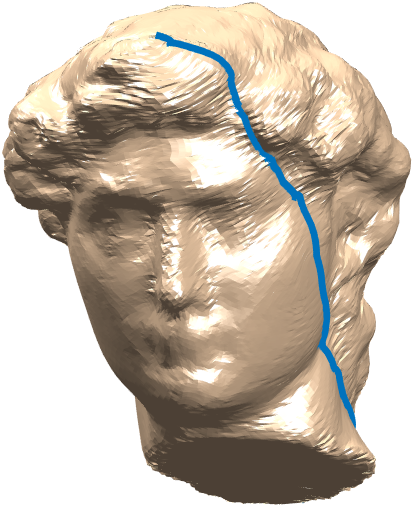} & & 
\includegraphics[height=2.5cm]{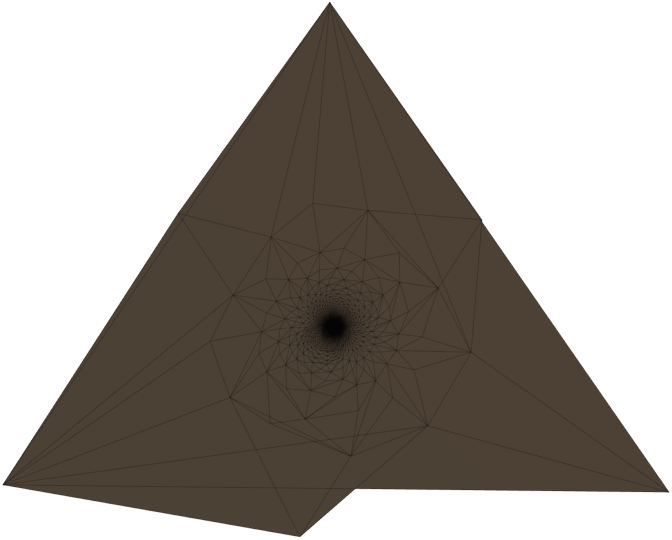} &
\raisebox{0.8cm}{$\longrightarrow$} & 
\includegraphics[height=2.5cm]{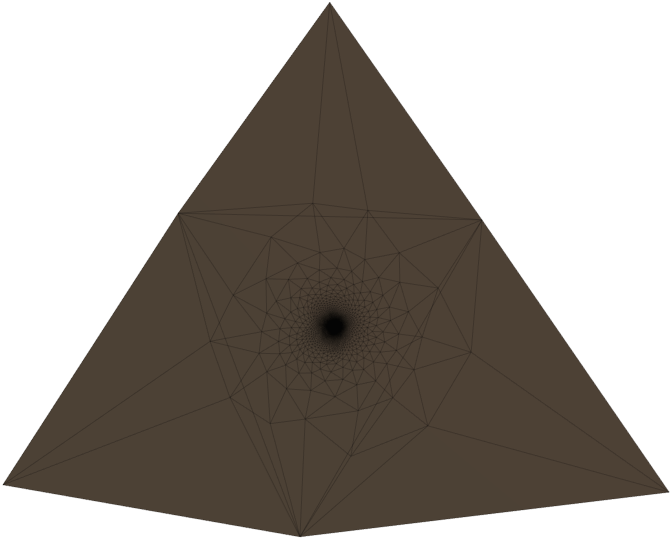} &
\raisebox{0.8cm}{$\longrightarrow$} & 
\includegraphics[height=2.5cm]{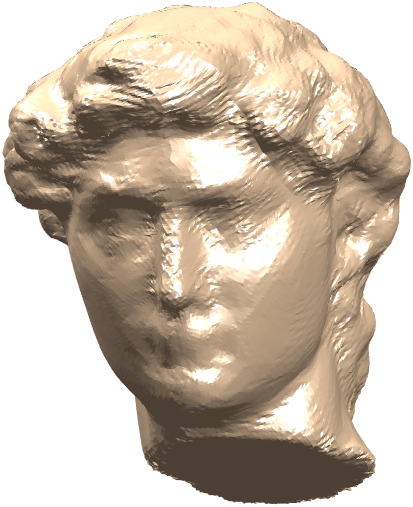}\\
Cut mesh & & Stereographic projection & & Triangulation & & Remeshing \\
\multicolumn{7}{c}{\upbracefill}\\
\multicolumn{7}{c}{Remeshing for genus-zero simplicial surface} \\ 
 & & & & 
\includegraphics[height=2.5cm]{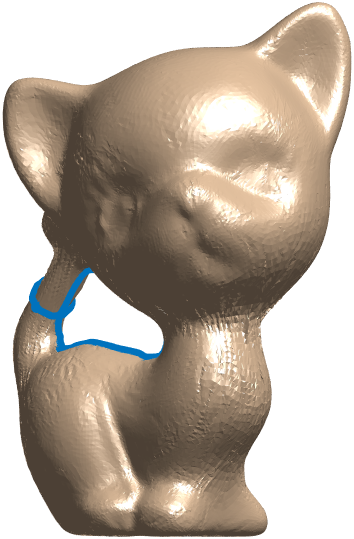} \\
 & & & & Remeshing \\
 & & & & $\uparrow$ & $\searrow$ \\
\includegraphics[height=2.5cm]{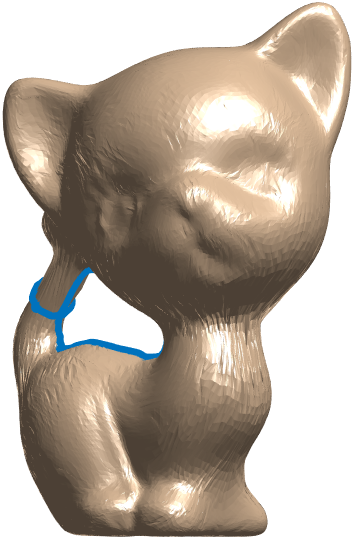} & 
\raisebox{0.8cm}{$\longrightarrow$} & 
\includegraphics[height=2.5cm]{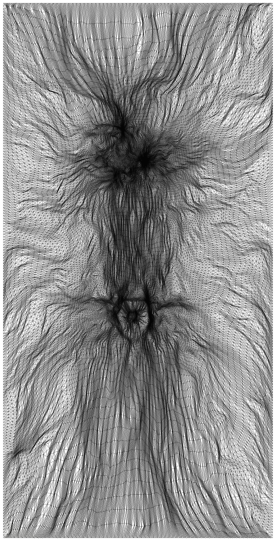} &
\raisebox{0.8cm}{$\longrightarrow$} & 
\includegraphics[height=2.5cm]{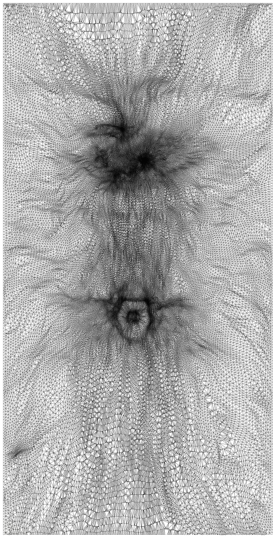} & &
\includegraphics[height=2.5cm]{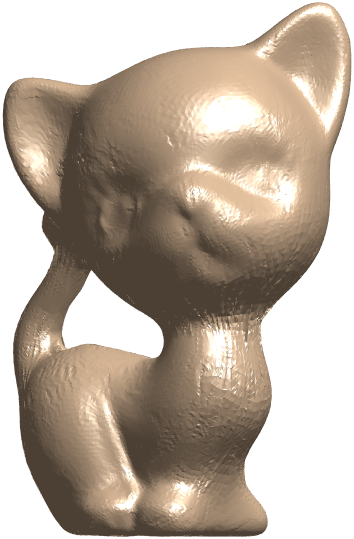} \\
Cut mesh & & Harmonic rectangular map & & Triangulation & & Glue mesh \\
\multicolumn{7}{c}{\upbracefill}\\
\multicolumn{7}{c}{Remeshing for genus-one simplicial surface} \\ 
\end{tabu}
}
\caption{ Remeshing procedure for reconstructed surfaces. For genus-zero surfaces, we glue the surface, compute a spherical harmonic map \cite{ChLL15}, and apply Delaunay triangulation on the stereographic plane. 
For genus-one surfaces, we compute a rectangular harmonic map, apply Delaunay triangulation, and glue the surface.
}
\label{fig:GeoImg_remesh}
\end{figure}

\subsection{Numerical results}
\label{sec:7.4}
In our framework, we utilize a constant face area parameterization with angular correction to generate geometry images, and apply remeshing and smoothing as a post-processing step in reconstruction. All benchmark models reconstructed from $200 \times 200$ geometry images are shown in Figure~\ref{fig:Img2Mesh_total}. Across these examples, our framework consistently achieves accurate reconstructions, preserving both fine details and overall structural integrity.

It is important to emphasize that each procedure plays a critical role. This is particularly evident in the Chess Horse model, as demonstrated in Figure~\ref{fig:Img2Mesh_beltrami}. Angular correction recovers details such as the back of the Chess Horse, while remeshing and smoothing significantly improve the angle distribution of the mesh. These results highlight the contribution of each step to high-quality surface reconstruction from geometry images.

\begin{figure}[]
\centering
\resizebox{\textwidth}{!}{
\begin{tabular}{cccc}
\begin{overpic}[height=4.0cm]{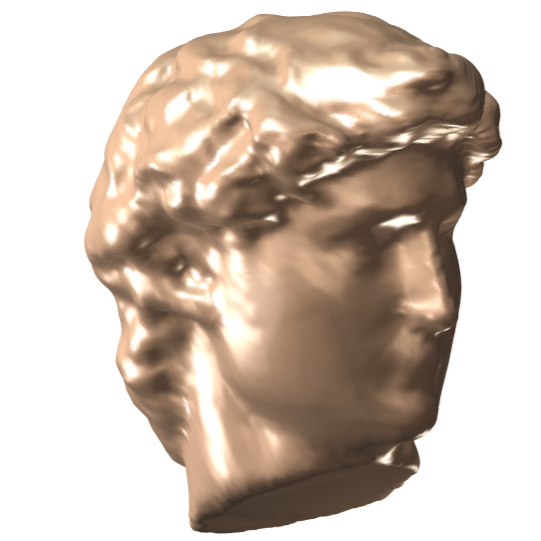}
    \put(0,0){\includegraphics[width=1.0cm]{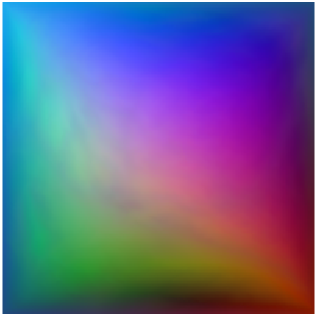}}
\end{overpic} & 
\begin{overpic}[height=4.0cm]{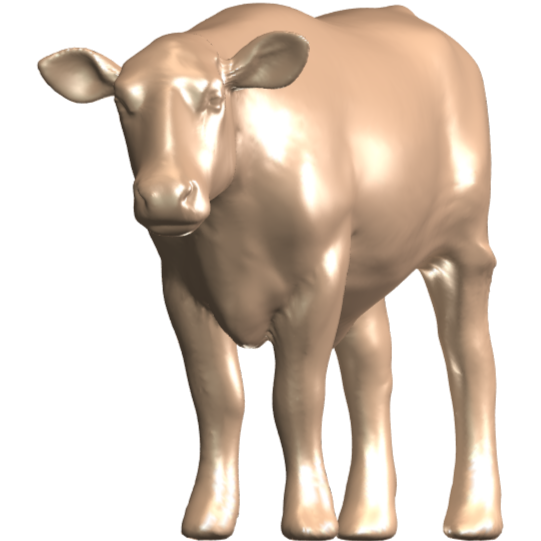}
    \put(0,0){\includegraphics[width=1.0cm]{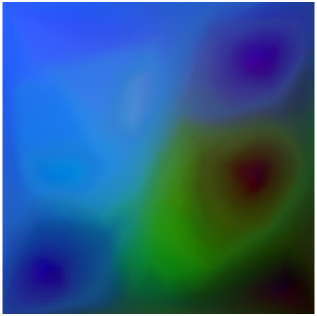}}
\end{overpic} & 
\begin{overpic}[height=4.0cm]{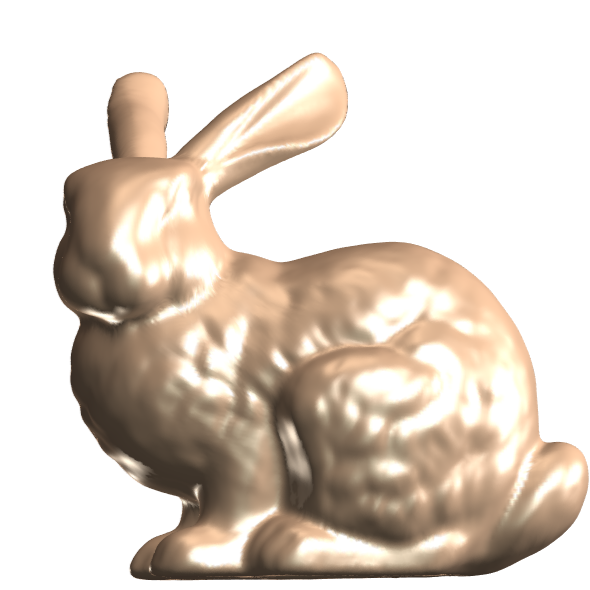}
    \put(0,0){\includegraphics[width=1.0cm]{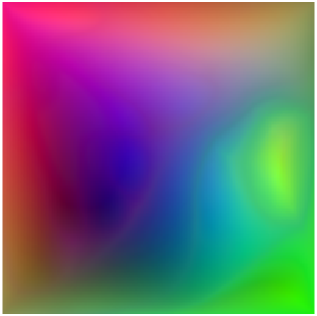}}
\end{overpic} & 
\begin{overpic}[height=4.0cm]{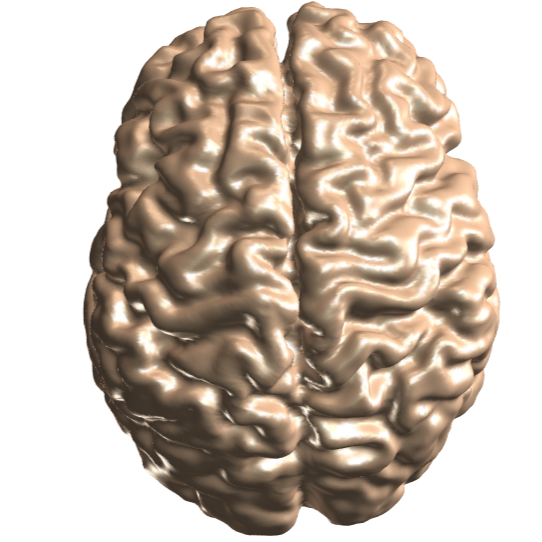}
    \put(0,0){\includegraphics[width=1.0cm]{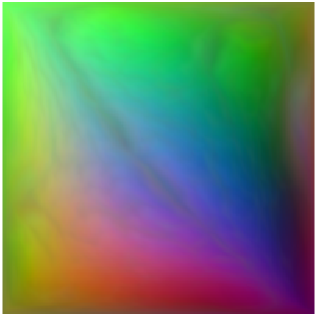}}
\end{overpic} \\
\begin{overpic}[height=4.0cm]{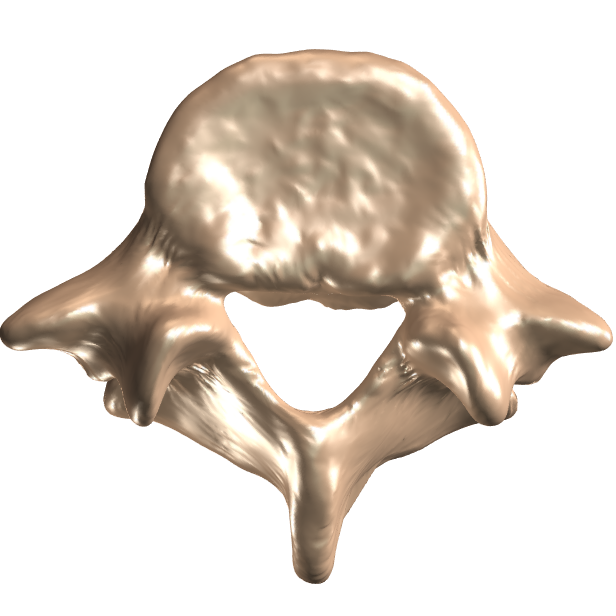}
    \put(0,0){\includegraphics[width=1.0cm]{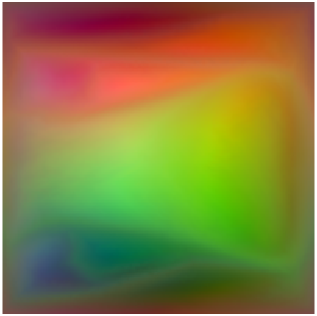}}
\end{overpic} &
\begin{overpic}[height=4.0cm]{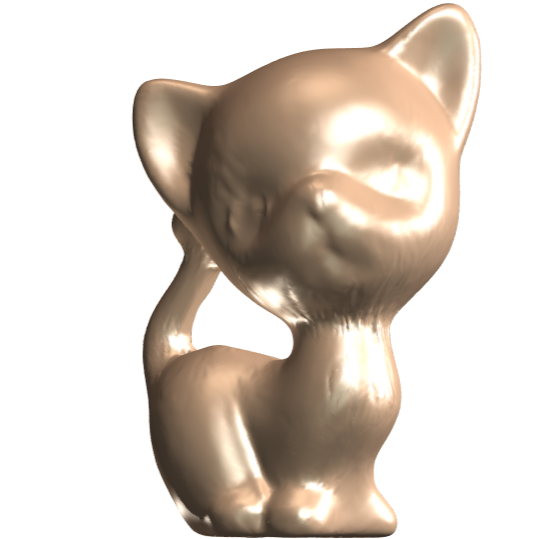}
    \put(0,0){\includegraphics[width=1.0cm]{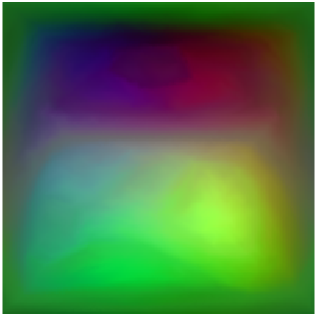}}
\end{overpic} &
\begin{overpic}[height=4.0cm]{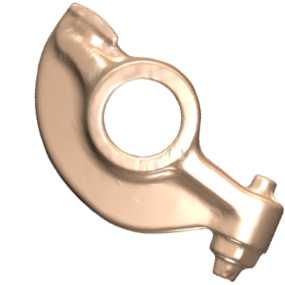}
    \put(0,0){\includegraphics[width=1.0cm]{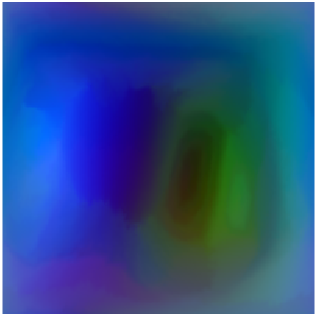}}
\end{overpic} &
\begin{overpic}[height=4.0cm]{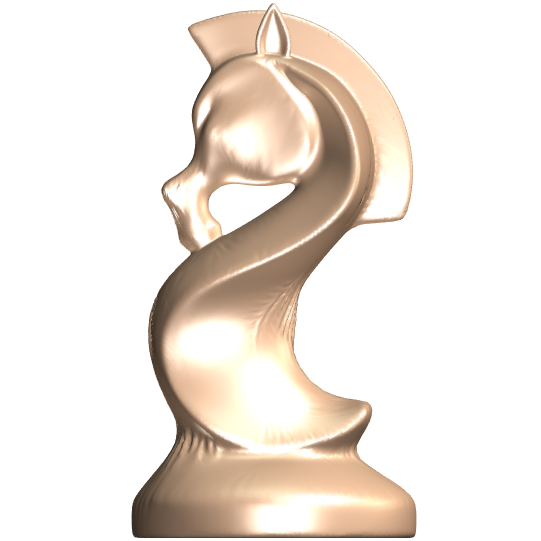}
    \put(0,0){\includegraphics[width=1.0cm]{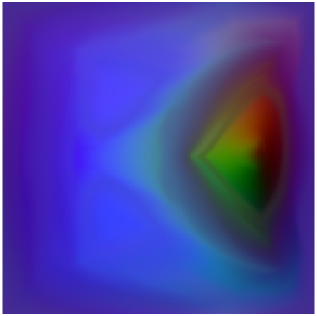}}
\end{overpic} \\
\end{tabular}
}
\caption{ Reconstructed surfaces from the framework $200 \times 200$ geometry images (bottom left) for all benchmark models.
}
\label{fig:Img2Mesh_total}
\end{figure}

\begin{figure}[]
\centering
\resizebox{\textwidth}{!}{
\begin{tabular}{cccc}
Constant face area &$\rightarrow$ ~Correction  & $\rightarrow$ ~Remeshing  &$\rightarrow$ ~ Smoothing\\
\begin{overpic}[height=3.5cm]{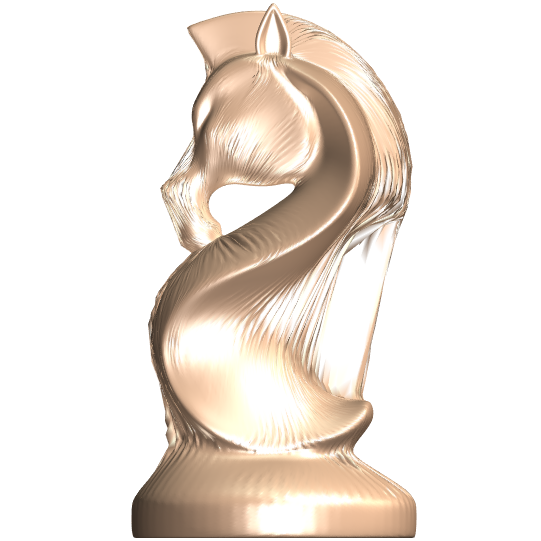}
    \put(0,0){\includegraphics[width=1.0cm]{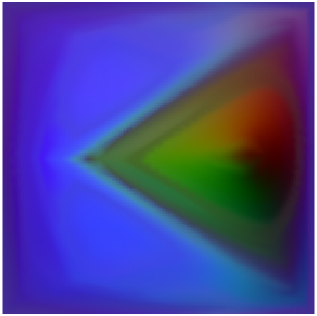}}
\end{overpic} &
\begin{overpic}[height=3.5cm]{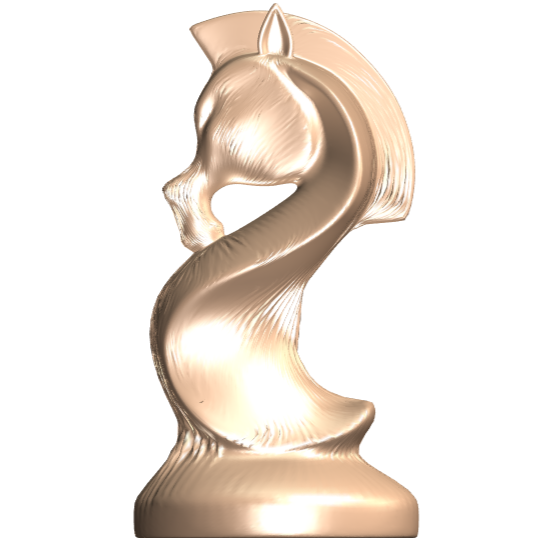}
    \put(0,0){\includegraphics[width=1.0cm]{ChessHorse_beltrami_img.png}}
\end{overpic} &
\includegraphics[height=3.5cm]{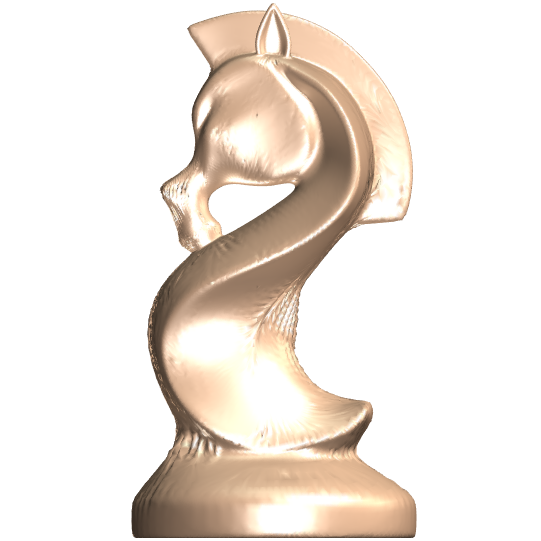} &
\includegraphics[height=3.5cm]{ChessHorse_beltrami_smooth.png} \\
\includegraphics[height=1.6cm]{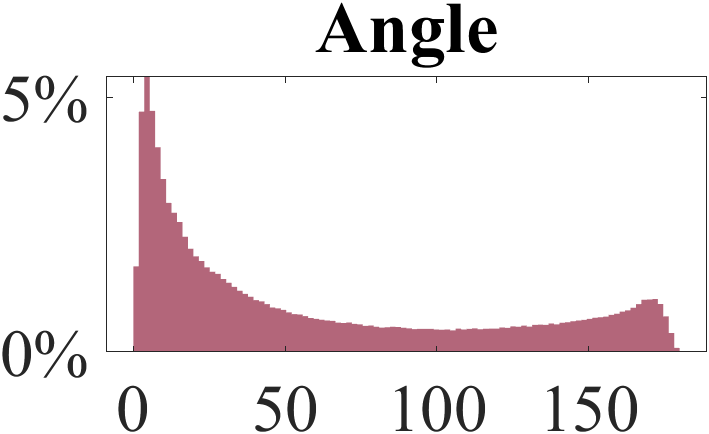} &
\includegraphics[height=1.6cm]{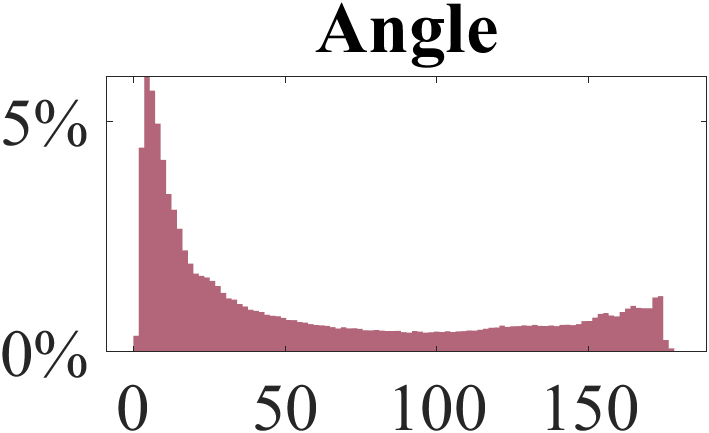} &
\includegraphics[height=1.6cm]{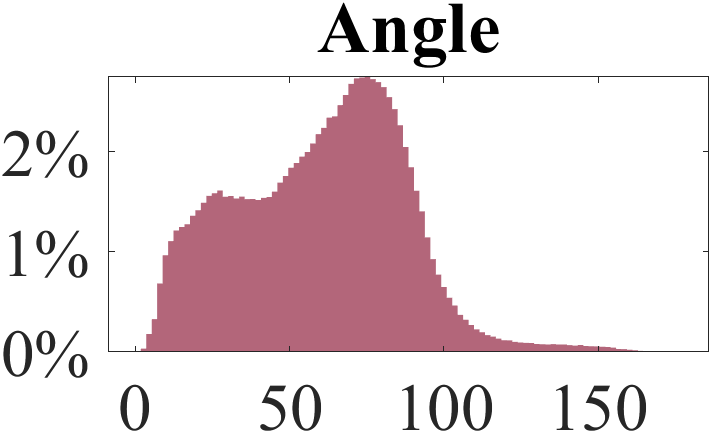} &
\includegraphics[height=1.6cm]{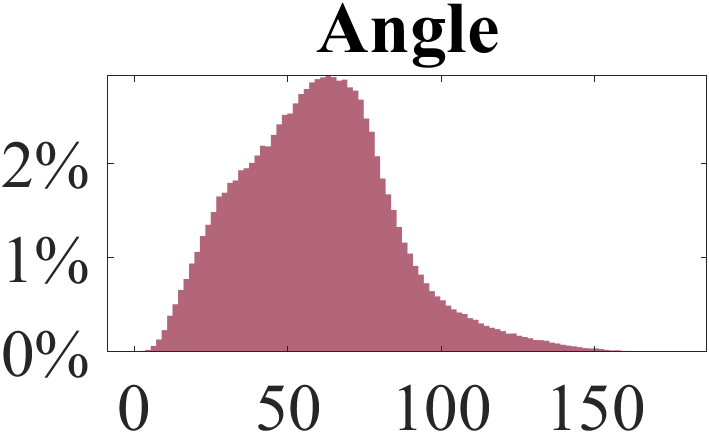} \\
(a) & (b) & (c) & (d)\\
\end{tabular}
}
\caption{ 
Reconstruction of the Chess Horse mesh (first row) from $200 \times 200$ geometry images (bottom left) and their angular distribution (second row): 
(a) constant face area map, (b) with angular correction, further refined by 
(c) remeshing and (d) smoothing.
}
\label{fig:Img2Mesh_beltrami}
\end{figure}

\section{Discussion} \label{sec:8}
In this section, we review related literature and discuss the contributions of our work.

\subsection{Stretch energy and authalic energy}
The stretch energy functional was first introduced by Yueh et al.~\cite{YuLW19}, obtained by imposing the area-preserving condition on the cotangent-weighted Laplacian of the Dirichlet energy functional. They demonstrated that minimizing the stretch energy functional via a fixed-point method achieves numerical area-preserving maps. However, the stretch energy functional lacked a solid theoretical connection to area preservation until the work of Yueh~\cite{Yueh23}, which established its equivalence to the integral of squared area ratios. This shows that the minimizer is area-preserving provided that the total surface area is preserved.

Subsequently, Liu and Yueh~\cite{LiYu24} introduced the authalic energy functional to relax the total area constraint. This functional attains a minimum value of zero, which occurs when the mapping is area-preserving. Unlike stretch energy, authalic energy does not require fixing the total image area, thereby allowing the boundary of disk-shaped images to be updated during optimization.

In practice, however, it is generally impossible to achieve zero authalic energy in numerical settings, and the geometric meaning of authalic energy remains unclear. In this paper, we show the equivalence of the authalic energy and the area-weighted variance of per-triangle area ratios. This result clarifies that sufficiently low authalic energy ensures a corresponding level of area preservation, and it also explains why reducing authalic energy consistently enhances area preservation in previous studies.

This framework has further been extended to volumetric settings, where the stretch energy and the authalic energy correspond to the volumetric stretch energy~\cite{YuLL19, YuLL20, HuLL23, TaLL25} and the isovolumetric energy~\cite{LiHL24}, respectively.

\subsection{Geometry images and parameterization}
Geometry images, introduced by Gu et al.~\cite{GuGH02}, are RGB images that encode vertex positions by a rectangular parameterization, enabling surfaces to be reconstructed directly from the stored grid data. Area preservation is crucial in this process, as shown in~\cite{LiYu25}, since it maintains a uniform vertex distribution in the parameter domain.

Ideally, the triangular mesh of the square parameterization should closely align with the mesh structure used in the geometry image. In this work, we address this by computing parameterizations with constant face areas in the square domain and then correcting triangles with severe angular distortion. This approach ensures that vertices are distributed more evenly across pixels of the geometry image, thereby enabling accurate and high-quality surface reconstructions.

\section{Conclusion} \label{sec:9}
In this work, we establish a unified framework for square-domain area-preserving parameterizations of genus-zero and genus-one closed surfaces. A key theoretical contribution is the interpretation of the authalic energy as the weighted variance of per-triangle area ratios, which provides both a clear geometric meaning and a practical guide for optimization. Combined with a fixed-point initialization and a preconditioned nonlinear conjugate gradient scheme, the method achieves accurate and efficient parameterizations, with numerical results confirming both low area distortion and bijectivity. We further demonstrate the utility of square parameterizations in making high-quality geometry images and reliable surface reconstructions. These results highlight the potential of energy-based approaches to support applications in surface representation, remeshing, and data compression.

\section*{Acknowledgements}
The work of the authors was partially supported by the National Science and Technology Council and the National Center for Theoretical Sciences, Taiwan.

\small
\bibliographystyle{abbrv}
\bibliography{reference}

\end{document}